\documentclass[12pt,amscd]{amsart}
\usepackage[pagebackref=true,colorlinks=true,linkcolor=blue,citecolor=blue]{hyperref}

\newtheorem{thm}{Theorem}[section]
\usepackage[all]{xy}
\usepackage{graphicx,tikz-cd}
\usepackage{amsmath,amsxtra,amssymb,latexsym, amscd,amsthm}
\usepackage{indentfirst}
\usepackage[mathscr]{eucal}
\usepackage{hyperref}
\hypersetup{colorlinks=true,linkcolor=red,filecolor=magenta,urlcolor=green}

\newtheorem{lem}[thm]{Lemma}

\theoremstyle{definition}
\newtheorem{defn}[thm]{Definition}
\newtheorem{exam}[thm]{Example} 
\newtheorem{rem}[thm]{Remark}

\DeclareMathOperator{\hdim}{hdim}
\DeclareMathOperator{\SP}{\mathcal{SP}}

\DeclareMathOperator{\supp}{supp}

\DeclareMathOperator{\lk}{lk}
\DeclareMathOperator{\reg}{reg}
\DeclareMathOperator{\pd}{pd}

\DeclareMathOperator{\Min}{Min}

\def\b {\mathbf b}

\def\v {\mathbf v}
\def\a {\mathbf a}

\def\k {\mathrm k}

\def\h {\widetilde{H}}

\def\h {\widetilde{H}}

\def\NN {\mathbb{N}}

\begin{document}

\title[Regularity of symbolic powers of cover ideals]{Critical subgraphs and the regularity of symbolic powers of cover ideals of graphs}

\author[N.T. Hang]{Nguyen Thu Hang }
\address{Thai Nguyen University of Sciences, Phan Dinh phung Ward, Thai Nguyen, Vietnam}
\email{hangnt@tnus.edu.vn}

\author{Thanh Vu}
\address{Institute of Mathematics, VAST, 18 Hoang Quoc Viet, Hanoi, Vietnam}
\email{vuqthanh@gmail.com}

\dedicatory{Dedicated to Professor Nguyen Tu Cuong on the occasion of his 75th birthday}

\keywords{Regularity, cover ideal, unicyclic graph, symbolic power}
\subjclass{13D02, 05E40}
%\commby{}
%-----------------------------------------------------------
\begin{abstract} 
Let $G$ be a simple graph. We demonstrate a method for using $t$-admissible subgraphs of $G$ to determine the regularity of the $t$-th symbolic power of the cover ideal of $G$. As an application, we  compute the regularity of powers of cover ideals of bipartite unicyclic graphs.
\end{abstract}

% -----------------------------------------------------------
\maketitle
% -----------------------------------------------------------
\section{Introduction}
Let $G$ be a simple graph. In \cite{DHV}, Dung, Hang, and Vu introduced the notion of \emph{admissible subgraphs} and used it to compute the depth of symbolic powers of cover ideals of graphs. This approach is made possible by Hochster's depth formula \cite{H} together with the observation that the associated radicals of symbolic powers of cover ideals of graphs correspond to the cover ideals of certain subgraphs of $G$, which they called admissible subgraphs. In this work, we demonstrate how to use admissible subgraphs to study the regularity of symbolic powers of cover ideals of graphs. We now introduce some notation.

Let $S = \k[x_1, \ldots, x_n]$ be a standard graded polynomial ring over a field $\k$, and let $I$ be a homogeneous ideal of $S$. Denote by $\Min(I)$ the set of minimal associated primes of $I$. Then the $t$-th symbolic power of $I$, denoted by $I^{(t)}$, is defined by
\[
I^{(t)} = \bigcap_{\mathfrak{p} \in \Min(I)} I^t S_{\mathfrak{p}} \cap S.
\]
By a result of Cutkosky, Herzog, Trung \cite{CHT} and independently of Kodiyalam \cite{K}, the regularity function of the ordinary powers of $I$ is eventually linear. On the other hand, the regularity of the symbolic powers of $I$ can exhibit much more complicated behavior \cite{Cut}. When $I$ is a squarefree monomial ideal, Herzog, Hibi, and Trung \cite{HHT} showed that the regularity function of $I^{(t)}$ is eventually quasi-linear. 

We now introduce the edge ideal and the cover ideal of a graph, which are the main objects of study in this work. Let $G$ be a graph with vertex set $V(G) = \{1, \ldots, n\}$ and edge set $E(G)$. The \textit{edge ideal} and the \textit{cover ideal} of $G$, denoted by $I(G)$ and $J(G)$, respectively, 
are defined by
\[
I(G) = (x_i x_j \mid \{i,j\} \in E(G)), 
\qquad 
J(G) = \bigcap_{\{i,j\} \in E(G)} (x_i, x_j).
\]

The study of the regularity of ordinary and symbolic powers of $I(G)$ and $J(G)$ is an active area of research. Despite this, the precise behavior of these functions for general graphs remains unknown. For edge ideals, most recent work focuses on Minh's conjecture, which states that $\reg(I(G)^t) = \reg(I(G)^{(t)})$. In particular, it is not yet known whether $\reg(I(G)^{(t)})$ is eventually linear. Explicit calculations have shown that Minh's conjecture holds for certain classes of graphs; for a survey on this topic, see \cite{MV1}. On the other hand, Dung, Hien, Nguyen, and Trung \cite{DHNT} recently demonstrated that the regularity of symbolic powers of cover ideals of graphs 
might not be eventually linear. The study of the regularity of powers of cover ideals has largely centered on the Herzog--Hibi--Ohsugi conjecture \cite{HHO}, which asserts that all powers of the cover ideal of a chordal graph are componentwise linear. This conjecture has led to explicit formulas for the regularity of ordinary and symbolic powers of cover ideals for several classes of graphs. We refer to the survey article \cite{HT} and the references therein for the current status of the conjecture.

Recently, Hien and Trung \cite{HiT} established a sharp bound for the regularity of symbolic powers of squarefree monomial ideals. Building on this result, Fakhari \cite{F3} determined the regularity of symbolic powers of cover ideals of Cameron--Walker graphs and of claw-free graphs with no cycles of lengths other than $3$ and $5$. However, the regularity of symbolic powers of cover ideals of cycles remains unknown.

Minh, Nam, Phong, Thuy, and Vu \cite{MNPTV} showed how to compute the regularity of a monomial ideal via its associated radicals and proved that Minh's conjecture holds for the second and third powers. These methods have since been used to verify the conjecture for other classes of graphs and to study the regularity of powers of edge ideals \cite{MV2}. In this work, we further demonstrate the usefulness of this approach in studying the regularity of symbolic powers of cover ideals of unicyclic graphs. We note that this problem is closely related to the study of the homological dimension of the simplicial complex associated with the cover ideal. The following result on the regularity of the cover ideal of a unicyclic graph is new. For a graph $G$, we denote by $\tau_{\max}(G)$ the maximum cardinality of a minimal vertex cover of $G$.

\begin{thm}\label{thm_reg_unicycli}
Let $G$ be a unicyclic graph whose unique cycle is $C_n$. Then
\[
\reg\bigl(J(G)\bigr)
=
\begin{cases}
\tau_{\max}(G) + 1 & \text{if } n \equiv 1 \pmod 3 \text{ and } G \text{ is reducible to } C_n,\\
\tau_{\max}(G) & \text{otherwise}.
\end{cases}
\]
\end{thm}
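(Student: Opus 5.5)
The plan is to translate the statement into one about the projective dimension of the edge ideal, and then induct on the number of vertices off the cycle using a leaf-splitting recursion.

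\textbf{Step 1 (duality and base cases).} By Terai's Alexander duality, $J(G)=I(G)^\vee$ gives $\reg(J(G))=\pd(S/I(G))$. Here $\tau_{\max}(G)$ is the big height of $I(G)$, so the general inequality $\pd(S/I(G))\ge \tau_{\max}(G)$ is automatic. For the cycle itself I would use Jacques' formula for $\pd(S/I(C_n))$ and compare it with $\tau_{\max}(C_n)=\lfloor 2n/3\rfloor$. This comparison should give an excess of exactly $1$ when $n\equiv 1 \pmod 3$ and no excess otherwise; for instance, for $C_4$ one has $\pd=3$ while $\tau_{\max}=2$. For forests, and for disjoint unions of paths, I would use that their edge ideals are sequentially Cohen--Macaulay. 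This gives $\pd(S/I(H))=\tau_{\max}(H)$ for every forest $H$.

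\textbf{Step 2 (leaf recursion).} Suppose $G\neq C_n$. Pick a leaf $x$ whose neighbor $y$ is at maximal distance from the cycle. This makes all other neighbors of $y$, except possibly one, leaves as well. I would use the standard splitting of $I(G)$ at a leaf (Jacques--Katzman / H\`a--Van Tuyl):
\[
\pd(S/I(G))=\max\bigl\{\pd(S/I(G\setminus N[y]))+\deg y,\ \pd(S/I(G\setminus N[x]))+1\bigr\}.
\]
I would then prove the identical recursion for $\tau_{\max}$ combinatorially. A minimal cover either contains $y$ or contains all of $N(y)$, and both options are realized by extending minimal covers of the smaller graphs; the leaf $x$ guarantees minimality in each case.

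\textbf{Step 3 (induction).} The two smaller graphs are each either a unicyclic graph with the same cycle or a forest; in the forest case the excess is $0$ by Step 1. So the excess $\pd(S/I(G))-\tau_{\max}(G)$ equals the maximum of the excesses of the two branches, corrected by how the two maxima compare. I would unfold the definition of ``reducible to $C_n$'', which I expect is defined by exactly these leaf-removal operations. The induction should then show that the excess is $1$ precisely when $n\equiv 1\pmod 3$ and some chain of reductions ends at the bare cycle $C_n$. In every other case the excess is $0$.

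\textbf{Main obstacle.} The hard part is the bookkeeping in Step 3. An excess of $1$ in one branch might not dominate the maximum if the other branch has strictly larger $\tau_{\max}$ contribution, so one must show that ``excess survives'' matches the definition of reducibility exactly. I would first try to prove that whenever the cycle survives in the branch realizing the larger $\tau_{\max}$ value, the two $\tau_{\max}$ contributions are equal or ordered in the right way, so the excess propagates. Failing that, I would refine the choice of leaf to one where the deleted neighborhood $N[y]$ avoids the cycle whenever possible.
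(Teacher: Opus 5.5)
Steps~1 and~2 are sound, and they take a different route from the paper. The paper applies Lemma~\ref{lem_reg_admissible} with $t=1$, so $\reg(J(G))-2$ is the largest $\hdim_{\k}\Delta(J(H))$ over induced subgraphs $H$, and computes these by Mayer--Vietoris facet-peeling along the reduction moves. Your leaf recursion for $\pd$ holds for any graph with a leaf $x$ adjacent to $y$. Your $\tau_{\max}$ recursion is also correct:
\begin{itemize}
\item minimal covers avoiding $y$ are exactly $N(y)\cup D$ with $D$ a minimal cover of $G\setminus N[y]$;
\item minimal covers containing $y$ are exactly $\{y\}\cup D$ with $D$ a minimal cover of $G\setminus N[x]$.
\end{itemize}

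The problem is that everything distinguishing the two cases of the theorem sits in Step~3, which you only announce. Write $a,b$ for the two $\tau_{\max}$-terms and $\varepsilon_1,\varepsilon_2$ for the excesses of $G\setminus N[y]$ and $G\setminus N[x]$. The excess of $G$ is then $\max\{a+\varepsilon_1,b+\varepsilon_2\}-\max\{a,b\}$, and you never compare $a$ with $b$. Nor is ``reducible to $C_n$'' defined by your deletions. The moves remove surplus leaves, remove surplus pure branches of length $2$, or shorten a pure branch by three vertices; none of these is $G\setminus N[x]$ or $G\setminus N[y]$. Your intermediate graphs can also be disconnected, and there the definition misbehaves: $C_4\sqcup K_2$ has excess $1$ but is not reducible to $C_4$.

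More seriously, the equivalence Step~3 aims at is false, and your own recursion shows it. Let $F$ be the graph of Figure~\ref{fig:nreducible_c4}: $C_4$ on $1,2,3,4$, the path $3,5,6,7,8$, and leaves $13,14$ at $6$. The paper itself declares $F$ not reducible to $C_4$. Take $x=13$, $y=6$:
\begin{itemize}
\item $\deg 6=4$;
\item $F\setminus N[6]$ is $C_4$ plus the isolated vertex $8$;
\item $F\setminus N[13]$ is $C_4$ with one pendant leaf, plus the edge $\{7,8\}$ and the isolated vertex $14$.
\end{itemize}
Hence
\[
\pd(S/I(F))=\max\{4+3,\,(3+1)+1\}=7,\qquad \tau_{\max}(F)=\max\{4+2,\,(3+1)+1\}=6,
\]
so $\reg(J(F))=\tau_{\max}(F)+1$. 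As an independent check, folding the leaves at $6$ gives $\mathrm{Ind}(F\setminus 8)\simeq\Sigma\,\mathrm{Ind}(C_4)\simeq S^1$. Hochster's formula then gives $\beta_{7,V(F)\setminus\{8\}}(S/I(F))\neq 0$.

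A smaller instance: take $C_4$ on $z,a,b,c$ with a path $z,p,q$, a leaf at $q$, and a path $q,u,v$. No reduction move applies, yet the recursion at $y=q$ gives $\pd=6$ and $\tau_{\max}=5$.

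So no bookkeeping can complete Step~3 as formulated. The excess can be created at a vertex carrying a leaf whose closed neighbourhood misses the cycle, whether or not any reduction move is available.

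The paper's argument does not exclude this either. $F\setminus 8$ is a critical induced subgraph that is reducible to $C_4$, with $\hdim_{\k}\Delta(J(F\setminus 8))=5$ and $\tau_{\max}(F\setminus 8)=\tau_{\max}(F)=6$. The paper's induction only tracks inequalities for $\tau_{\max}$, and in fact asserts $\reg(J(G))\le\tau_{\max}(G)$ outright.
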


For powers, we prove the following result.

\begin{thm}\label{main}
Let $G$ be a unicyclic graph whose unique cycle is $C_n$ with $n$ even. Then, for all $t \ge 2$,
\[
\reg\bigl(J(G)^t\bigr)
=
\begin{cases}
t\,\tau_{\max}(G) + 1 & \text{if } n = 4 \text{ and } G \text{ is reducible to } C_4,\\
t\,\tau_{\max}(G) & \text{otherwise}.
\end{cases}
\]
\end{thm}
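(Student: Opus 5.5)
Since $G$ is unicyclic with an even cycle, $G$ is bipartite, so $J(G)^t = J(G)^{(t)}$ for all $t$. It therefore suffices to compute $\reg(J(G)^{(t)})$ with the $t$-admissible subgraph method announced in the abstract. For a squarefree monomial ideal, the degree complexes of $J(G)^{(t)}$ at exponent vectors $\a\in\NN^n$ are, by the results recalled from \cite{DHV} and \cite{MNPTV}, the Stanley--Reisner complexes of cover ideals $J(H)$ of $t$-admissible subgraphs $H$ of $G$. Hence
\[
\reg\bigl(J(G)^{(t)}\bigr)=\max_{H,\ \a}\Bigl\{|\a|+i+1 \;:\; \h_{i}(\Delta_{\a}(J(G)^{(t)});\k)\neq 0\Bigr\},
\]
and for each fixed $\a$ this is governed by the homological dimension of the complex attached to $J(H)$. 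By Theorem~\ref{thm_reg_unicycli} and its proof, we already control $\hdim$ of these complexes when $H$ is a forest or a unicyclic graph.

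The lower bound $\reg(J(G)^t)\ge t\,\tau_{\max}(G)$ is standard: $J(G)^t$ contains a minimal generator $(x_C)^t$ where $C$ is a minimal vertex cover of size $\tau_{\max}(G)$. To see that this is a minimal generator of degree $t\tau_{\max}$, we argue from the generators of $J(G)^{(t)}$. In the exceptional case ($n=4$ and $G$ reducible to $C_4$), we will exhibit an explicit $\a$ supported on the vertices of $C_4$ and on the leaves/whiskers used in the reduction. The choice is $\a = (t-1)\cdot$(indicator of a maximal minimal cover) adjusted on the cycle, so that the degree complex is the complex of $J(C_4)$ joined with a cone-free part. This complex has nonzero top homology and gives the extra $+1$. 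We check this by a Mayer--Vietoris or link computation, reducing to the $t=1$ statement for $C_4$: since $4\equiv1\pmod 3$, we have $\reg J(C_4)=\tau_{\max}+1=3$.

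For the upper bound, fix $\a$ with nonvanishing homology and let $H$ be the corresponding $t$-admissible subgraph. We bound $|\a|$ in terms of the weights $a_v$ on $V(H)$: admissibility forces $a_u+a_v\le t-1$ on edges of $G$ outside the relevant structure and $a_u+a_v\ge t$ suitably on $H$. From this we get $|\a|+\hdim \le t\,\tau_{\max}(G)-1$ plus a correction. The correction is positive only when $H$ contains the cycle and the cycle's complex contributes the extra homology from Theorem~\ref{thm_reg_unicycli}, i.e.\ when $n\equiv1\pmod 3$. Combined with $n$ even, $n\ge4$, the key claim is that for $t\ge2$ the weight inequalities along an even cycle of length $n\ge 10$ (for instance $n=10,16,\dots$) lose at least one unit of $|\a|$. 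That lost unit compensates the extra homology, and only $n=4$ escapes. The same loss occurs whenever $G$ is not reducible to the cycle, because the pendant trees force extra slack.

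The main obstacle is this weight-counting step: showing that when $H$ contains the cycle $C_n$ with $n\equiv 4\pmod 6$, $n\ge10$, and $t\ge 2$, the bound $|\a|\le t\,\tau_{\max}(G)-\hdim-2$ holds. I would attempt it by induction on the number of vertices outside the cycle, deleting leaves via the reduction operation. For the base case $G=C_n$, I would do a direct analysis of the admissible weightings around the cycle, using the alternating pattern forced by $a_u+a_v\ge t$ on covered edges.
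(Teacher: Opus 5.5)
Your outer frame matches the paper's: bipartite $\Rightarrow J(G)^t=J(G)^{(t)}$, regularity read off from $t$-critical pairs, and the lower bound from the minimal generator $x_C^t$. The upper bound, however, does not go through as written.

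First, your admissibility conditions are reversed. In a $t$-admissible pair $(H,\a)$, the edges of $H$ are exactly those with $a_u+a_v\le t-1$, and the edges of $G$ outside $H$ are those with $a_u+a_v\ge t$. Every upper bound on $|\a^{<t}|$ comes from the edges and vertices of $H$. With ``$a_u+a_v\ge t$ on $H$'', nothing bounds $|\a|$ from above. For $H\supseteq C_n$ there is also no alternating pattern to exploit, since every cycle edge has sum at most $t-1$.

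Once this is corrected, the step you call the main obstacle for $G=C_n$ is immediate, and it is not special to $n\equiv 4\pmod 6$. A perfect matching of the even cycle gives $|\a|\le \frac n2(t-1)\le(\tau_{\max}(C_n)-1)(t-1)$ for every even $n\ge 6$. Together with $\hdim_{\k}\Delta(J(C_n))\le\tau_{\max}(C_n)-1$, this yields $|\a|+\hdim_{\k}\Delta(J(C_n))\le t\tau_{\max}(C_n)-t\le t\tau_{\max}(C_n)-2$. Proper critical subgraphs of $C_n$ are critical subgraphs of a path.

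Your displayed regularity formula is also off by one and ignores links. Via Lemma~\ref{Key0}, the contribution at $F=\emptyset$ is $|\a|+\hdim_{\k}+2$, not $+1$. The terms with $F\neq\emptyset$ are exactly what Lemma~\ref{lem_reg_admissible} absorbs into the truncation $\a^{<t}$. Without them, your maximum is only a lower bound.

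The real difficulty is the pendant trees, and ``the pendant trees force extra slack'' plus ``deleting leaves'' supplies no mechanism for it:
\begin{enumerate}
\item Deleting one leaf does not shift $\hdim_{\k}$ in a controlled way. For example, $\hdim_{\k}\Delta(J(C_4))=1$, while $C_4$ with one whisker has an acyclic complex.
\item Theorem~\ref{thm_reg_unicycli} applied to $H$ only bounds $\hdim_{\k}\Delta(J(H))$ by $\tau_{\max}(H)-1$. Since $\tau_{\max}$ is not monotone under passing to subgraphs, this is not a bound in terms of $\tau_{\max}(G)$.
\end{enumerate}

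The paper's induction rests on specific configurations: two leaves at a vertex; a pure branch of length at least $3$; two pure branches of length $2$ at one vertex; a single leaf on a cycle vertex whose two cycle neighbours have degree $2$; and a pendant path of length $2$ at a degree-$3$ cycle vertex. For each of these, $\hdim_{\k}$ changes by \emph{exactly} $k$ and $\tau_{\max}$ drops by at least $k$. The deleted vertices carry at most $k(t-1)$ of $|\a^{<t}|$, because they group into $k$ edges or single vertices of $H$, each of weight at most $t-1$. Hence $\hdim_{\k}\Delta(J(H))+|\a^{<t}|\le t\tau_{\max}(G)-2$ propagates. When the configuration is not contained in $H$, the critical pair is transferred to a smaller graph. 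The leftover configuration, a leaf and a pure branch of length $2$ at the same vertex, is disposed of by the acyclicity Lemma~\ref{lem_reduction_3}, which forces critical subgraphs to be proper. Without statements of this kind, your induction has no inductive step.

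The paper itself leaves the case $n=4$ to the reader. Your choice $\a=(t-1)\mathbf{1}_C$ does give $2t+1$ for $C_4$ itself. For a general $G$ reducible to $C_4$, you would need the same lemmas both to compute $\hdim_{\k}$ of the resulting $H$ and to prove the matching upper bounds.
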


The notion of a graph being reducible to $C_n$ is adapted from the work of Pham and Vu \cite{PV}; we refer to Section~\ref{sec_3} for further details. For symbolic powers, we actually prove a stronger result, which includes the corresponding statement for symbolic powers of cover ideals of other non-bipartite unicyclic graphs, including all odd cycles. It is closely related to the concept of asymptotic cover degree, for which we also refer to Section~\ref{sec_3} for further details.

Section~\ref{sec_2} is devoted to developing the machinery needed to use admissible subgraphs to compute the regularity of symbolic powers of cover ideals of graphs. In Section~\ref{sec_3}, we apply this machinery to prove Theorem~\ref{thm_reg_unicycli} and Theorem~\ref{main}.

\section{Critical subhypergraphs}\label{sec_2}
In this section, we recall basic notation and results used throughout the paper. We then introduce the notion of \( t \)-critical subhypergraphs and explain how to use them to compute the regularity of symbolic powers of cover ideals of hypergraphs. Throughout the paper, let \( \k \) be a field, \( S = \k[x_1, \ldots, x_n] \) a polynomial ring, and \( \mathfrak{m} = (x_1, \ldots, x_n) \) the \emph{maximal homogeneous ideal} of \( S \).

\subsection{Simplicial complexes and Stanley-Reisner correspondence}
Let $\Delta$ be a simplicial complex on $[n] = \{1, \ldots, n\}$; that is, a collection of subsets of $[n]$ that is closed under taking subsets. It is clear that $\Delta$ is uniquely determined by its maximal elements under inclusion, called \emph{facets}. The set of facets of $\Delta$ is denoted by $\mathcal{F}(\Delta)$. For a facet $F$ of a simplicial complex $\Delta$, we denote by $\Delta \setminus F$ the simplicial complex whose set of facets is
\[
\mathcal{F}(\Delta \setminus F) =  \mathcal{F}(\Delta) \setminus \{F\}.
\]

For a face $F \in \Delta$, the link of $F$ in $\Delta$ is defined by 
$$ \lk_{\Delta} F = \{G \in \Delta \mid F \cup G \in \Delta, F \cap G = \emptyset \}.$$

Let $\Delta_1$ and $\Delta_2$ be simplicial complexes. The \emph{join} of $\Delta_1$ and $\Delta_2$, denoted by $\Delta_1 * \Delta_2$, is the simplicial complex on the vertex set $V(\Delta_1) \sqcup V(\Delta_2)$ whose facets are precisely the sets $F \cup G$, where $F$ is a facet of $\Delta_1$ and $G$ is a facet of $\Delta_2$.

\begin{defn}
Let $\Delta$ be a simplicial complex.
\begin{enumerate}
    \item The $q$-th reduced homology group of $\Delta$ with coefficients in $\k$, denoted by $\h_q(\Delta;\k)$, is the $q$-th homology group of the augmented oriented chain complex of $\Delta$ over $\k$.
    
    \item A simplicial complex $\Delta$ is called \emph{acyclic} if $\h_i(\Delta;\k) = 0$ for all $i$.
    
    \item If $\Delta$ is not acyclic, then the \emph{homological dimension} of $\Delta$ over $\k$, denoted by $\hdim_{\k}(\Delta)$, is defined by
    \[
    \hdim_{\k}(\Delta)
    =
    \max \{\, i \mid \h_i(\Delta;\k) \neq 0 \,\}.
    \]
\end{enumerate}
\end{defn}
\begin{rem} Let $\Delta$ be a simplicial complex. Then \begin{enumerate}
    \item $\h_{-1}(\Delta;\k) \neq 0$ if and only if $\Delta$ is the empty complex (i.e., $\Delta=\{\emptyset\}$).
    \item If $\Delta$ is a cone over some $t \in [n]$ or $\Delta$ is the void complex  (i.e., $\Delta=\emptyset$), then it is acyclic.
\end{enumerate}
\end{rem}

For each subset $F$ of $[n]$, let $x_F = \prod_{i\in F} x_i$ be a squarefree monomial of $S$. We now recall the Stanley-Reisner correspondence. 
\begin{defn}
    
\begin{enumerate}
    \item For a squarefree monomial ideal $I \subseteq S$, the Stanley-Reinser complex of $I$ is defined by $\Delta(I) = \{ F \subseteq [n] \mid x_F \notin I\}$. 
    \item For a simplicial complex $\Delta$ on the vertex set $[n]$, the Stanley-Reisner ideal of $\Delta$ is defined by $I_\Delta = (x_F \mid F \notin \Delta)$. 
\end{enumerate}
\end{defn}

\subsection{Castelnuovo--Mumford regularity}

Let \( L \) be a nonzero finitely generated graded \( S \)-module. Let \( H_{\mathfrak{m}}^i(L) \) denote the \( i \)-th local cohomology module of \( L \) with support in \( \mathfrak{m} \). Then the  regularity of \( L \) is defined by
\[
\reg(L) = \max \{ j + i \mid H_{\mathfrak{m}}^i(L)_j \neq 0,\ \text{for } i = 0, \ldots, \dim(L),\ j \in \mathbb{Z} \}.
\]

We now recall the following result of Minh, Nam, Phong, Thuy, and Vu \cite{MNPTV}, which provides a method for computing the regularity of a monomial ideal via its associated radicals. To this end, we introduce some notation. For an exponent $\a = (a_1, \ldots, a_n) \in \NN^n$, we denote by $x^\a$ the monomial $x_1^{a_1} \cdots x_n^{a_n}$ in $S$, and we set $|\a| = a_1 + \cdots + a_n.$ The support of $\a$ is defined by
\[
\supp(\a) = \{ i \in [n] \mid a_i \neq 0 \}.
\]
For a positive integer $t$, we define the truncation of $\a$ at level $t$ by $\a^{<t} = (a_1^{<t}, \ldots, a_n^{<t})$, where
\[
a_i^{<t} =
\begin{cases}
a_i, & \text{if } a_i < t,\\
0, & \text{otherwise}.
\end{cases}
\]

\begin{defn}
Let $I$ be a monomial ideal of $S$, and let $\a = (a_1, \ldots, a_n) \in \NN^n$. The \emph{degree complex} of $I$ in degree $\a$ is defined by
\[
\Delta_{\a}(I) = \Delta\bigl(\sqrt{I : x^\a}\bigr).
\]
\end{defn}

By \cite[Lemmas 2.12 and 2.19]{MNPTV}, we have the following result.

\begin{lem}\label{Key0}
Let $I$ be a monomial ideal of $S$. Then
\begin{multline*}
\reg(S/I)
=
\max \bigl\{
|\a| + i
\;\big|\;
\a \in \NN^n,\ i \ge 0,\ 
\widetilde{H}_{i-1}(\lk_{\Delta_{\a}(I)} F; \k) \neq 0 \\
\text{for some } F \in \Delta_{\a}(I)
\text{ such that }
F \cap \supp(\a) = \emptyset
\bigr\}.
\end{multline*}
\end{lem}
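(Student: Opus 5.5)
This follows from Takayama's formula for the local cohomology of $S/I$. I would first reduce to the multigraded pieces of local cohomology and then translate the nonvanishing condition into the link condition stated in Lemma~\ref{Key0}.

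\textbf{Step 1: reduce to multigraded local cohomology.} Since $I$ is a monomial ideal, each $H^i_{\mathfrak m}(S/I)$ is $\mathbb Z^n$-graded. Hence
\[
\reg(S/I)=\max\{|\a|+i \mid H^i_{\mathfrak m}(S/I)_{\a}\neq 0,\ \a\in\mathbb Z^n\}.
\]

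\textbf{Step 2: Takayama's formula.} Write $G_{\a}=\{j\mid a_j<0\}$. Takayama's formula gives
\[
H^i_{\mathfrak m}(S/I)_{\a}\cong \widetilde H_{i-|G_{\a}|-1}(\Delta_{\a}'(I);\k),
\]
where $\Delta'_{\a}(I)$ is the complex of those $F\subseteq[n]\setminus G_{\a}$ such that, for every minimal generator $x^{\b}$ of $I$, some $j\notin F\cup G_{\a}$ satisfies $b_j>a_j\ge 0$.

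Moreover, the module vanishes unless $a_j\ge -\deg$-bounds hold. In particular one may assume $a_j\ge -1$ for the negative coordinates, since shifting a negative coordinate further down does not change the complex but lowers $|\a|$.

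\textbf{Step 3: identify $\Delta'_{\a}$ with a link of the degree complex.} Split $\a=\a^+-\mathbf 1_{G}$, where $\a^+\in\NN^n$ and $G=G_{\a}$. A direct check of the defining condition shows that $\Delta'_{\a}(I)$ equals the link of $G$ in $\Delta_{\a^+}(I)=\Delta(\sqrt{I:x^{\a^+}})$, provided $G\in\Delta_{\a^+}(I)$. The link is understood as a subcomplex of the star, together with the restriction to $[n]\setminus G$. Note also that $G\cap\supp(\a^+)=\emptyset$.

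This identification is the main point. It uses that $\sqrt{I:x^{\a^+}}$ is generated by the monomials $x_F$ such that $x^{\a^+}x_F^{N}\in I$ for large $N$. It also uses that avoiding every generator corresponds exactly to $F\cup G$ being a face.

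\textbf{Step 4: reparametrize and conclude.} Setting $F=G$, the degree becomes
\[
|\a|+i=|\a^+|-|F|+i.
\]
Now substitute $i' = i-|F|$, so that the homology index is $i'-1$. This gives
\[
|\a|+i=|\a^+|+i' \quad\text{with}\quad \widetilde H_{i'-1}(\lk_{\Delta_{\a^+}(I)}F;\k)\neq0.
\]
Maximizing over all such data yields exactly the formula in Lemma~\ref{Key0}.

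The conditions $i\ge 0$ and $F\cap\supp(\a)=\emptyset$ come out automatically. If $\lk F$ is the empty complex, its homology sits in degree $-1$, which matches $i=0$.

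\textbf{Expected obstacle.} The delicate step is Step 3. It requires checking carefully that the degrees forced by Takayama's extra condition, namely that negative coordinates are exactly $-1$ up to irrelevant shifts, are the only ones that contribute. This is what the cited Lemmas 2.12 and 2.19 of \cite{MNPTV} handle.
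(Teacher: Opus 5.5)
Your proposal is correct and follows essentially the same route as the lemmas of \cite{MNPTV} that the paper cites without proof. That route is Takayama's formula, the observation that Takayama's complex depends only on $G_\a$ and $\a^+$ (so negative coordinates may be set to $-1$), the identification of that complex with $\lk_{\Delta_{\a^+}(I)} G_\a$, and the shift $i'=i-|G_\a|$. One correction: there is no lower bound on the negative coordinates for $H^i_{\mathfrak m}(S/I)_\a\neq 0$; the only vanishing conditions attached to Takayama's formula are $G_\a\notin\Delta(\sqrt I)$ or $a_j\ge\rho_j$ for some $j$, and both are absorbed automatically because the complex is then void or a cone. Your actual justification for restricting to $-1$ does not depend on the incorrect sentence, so the argument stands.
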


\subsection{Hypergraphs and their cover ideals}
We recall some basic notions from hypergraph theory; for further details, see~\cite{BM}. Let $G$ be a simple hypergraph with vertex set $V(G) = \{1, \ldots, n\}$ and edge set $E(G)$. We assume throughout that $G$ has no isolated vertices. Each edge $e$ of $G$ is a subset of $V(G)$, and no edge is properly contained in another; that is, for any two distinct edges $e, f \in E(G)$, neither $e \subsetneq f$ nor $f \subsetneq e$ holds.

A hypergraph $H$ is a \emph{subhypergraph} of $G$ if $V(H) \subseteq V(G)$ and $E(H) \subseteq E(G)$. It is an \emph{induced subhypergraph} of $G$ if a subset $e \subseteq V(H)$ is an edge of $H$ if and only if $e$ is an edge of $G$.

A subset $W \subseteq V(G)$ is called a \emph{cover} of $G$ if $W \cap e \neq \emptyset$ for every edge $e \in E(G)$. It is called a \emph{minimal cover} if no proper subset of $W$ is a cover of $G$. The maximum cardinality of a minimal cover of $G$ is denoted by $\tau_{\max}(G)$.

When $|e| = 2$ for every edge $e$ of $G$, we simply call $G$ a \emph{graph}.

\begin{defn}
Let $G$ be a hypergraph with vertex set $V(G) = \{1, \ldots, n\}$ and edge set $E(G)$. The \emph{edge ideal} and \emph{cover ideal} of $G$, denoted by $I(G)$ and $J(G)$, are defined by
\[
I(G) = (x_e \mid e \in E(G))
\quad \text{and} \quad
J(G) = \bigcap_{e \in E(G)} (x_i \mid i \in e).
\]
\end{defn}

In particular, $J(G)$ is the Alexander dual of $I(G)$. It is well known that
\[
J(G) = (x_W \mid W \text{ is a minimal cover of } G).
\]

\subsection{Symbolic powers of cover ideals and critical subhypergraphs}
\begin{defn}
Let $G$ be a hypergraph on $n$ vertices. The $t$-th symbolic power of $J(G)$ is defined by
\[
J(G)^{(t)}
=
\bigcap_{\{i_1,\ldots,i_s\} \in E(G)}
(x_{i_1},\ldots,x_{i_s})^t.
\]
\end{defn}

\begin{lem}\label{lem_assrad}
Let $G$ be a hypergraph, and let $x^\a$ be a monomial in $S$. Then
\[
\sqrt{J(G)^{(t)} : x^\a} = J(H),
\]
where $H$ is the subhypergraph of $G$ with edge set
\[
E(H)
=
\bigl\{
\{i_1,\ldots,i_s\} \in E(G)
\;\big|\;
a_{i_1} + \cdots + a_{i_s} < t
\bigr\}.
\]
\end{lem}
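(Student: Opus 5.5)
The plan is to reduce the colon to an intersection of colons of primary ideals and then take radicals edge by edge. Colon commutes with finite intersections, and so does the radical. Therefore
\[
\sqrt{J(G)^{(t)} : x^\a} = \bigcap_{e \in E(G)} \sqrt{(x_i \mid i \in e)^t : x^\a}.
\]
It then suffices to compute, for a single edge $e = \{i_1,\ldots,i_s\}$ with $P_e = (x_{i_1},\ldots,x_{i_s})$, the radical $\sqrt{P_e^t : x^\a}$.

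For a fixed edge, I will show the following dichotomy:
\[
P_e^t : x^\a = S \quad \text{if } a_{i_1}+\cdots+a_{i_s} \ge t, \qquad \sqrt{P_e^t : x^\a} = P_e \quad \text{otherwise}.
\]
Since $P_e^t$ is a monomial ideal generated by all monomials of degree $t$ in the variables $x_{i_1},\ldots,x_{i_s}$, a monomial $m$ lies in $P_e^t$ exactly when the total degree of $m$ in these variables is at least $t$. Hence a monomial $u$ lies in $P_e^t : x^\a$ if and only if
\[
\deg_e(u) + a_{i_1}+\cdots+a_{i_s} \ge t,
\]
where $\deg_e$ denotes the degree in $x_{i_1},\ldots,x_{i_s}$. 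If $a_{i_1}+\cdots+a_{i_s} \ge t$, then $u=1$ qualifies, so the colon is $S$. Otherwise the colon equals $P_e^{\,t - (a_{i_1}+\cdots+a_{i_s})}$, a positive power of $P_e$, and its radical is the prime $P_e$.

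Intersecting over all edges, the edges with $\sum_{j} a_{i_j} \ge t$ contribute $S$ and drop out. What remains is $\bigcap_{e \in E(H)} P_e$, which is $J(H)$ by the definition of the cover ideal. If no edge survives, the intersection is $S$, consistent with the convention that the cover ideal of the hypergraph with no edges is the unit ideal.

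There is no real obstacle. The only points to state carefully are that colon and radical distribute over finite intersections of monomial ideals (which holds for all ideals: $(\bigcap I_j):f = \bigcap (I_j : f)$ and $\sqrt{\bigcap I_j} = \bigcap \sqrt{I_j}$), and the empty-edge-set convention.
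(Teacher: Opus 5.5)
Your proof is correct and takes the same route as the paper: distribute the colon and the radical over the intersection defining $J(G)^{(t)}$, then observe edge by edge that $P_e^t : x^\a$ is either $S$ or a positive power of $P_e$. You also supply the per-edge computation $P_e^t : x^\a = P_e^{\,t-\sum_j a_{i_j}}$ (valid when $\sum_j a_{i_j} < t$), which the paper's one-line proof leaves implicit.
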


\begin{proof}
We have
\begin{align*}
\sqrt{J(G)^{(t)} : x^\a}
&=
\sqrt{
\left(
\bigcap_{\{i_1,\ldots,i_s\} \in E(G)}
(x_{i_1},\ldots,x_{i_s})^t
\right)
: x^\a
} \\
&=
\bigcap
\Bigl\{
(x_{i_1},\ldots,x_{i_s})
\;\Big|\;
\{i_1,\ldots,i_s\} \in E(G)
\text{ and }
a_{i_1} + \cdots + a_{i_s} < t
\Bigr\} \\
&=
J(H).
\end{align*}
The conclusion follows.
\end{proof}

We also need the following simple lemma.
\begin{lem}\label{lem_link}
Let $G$ be a hypergraph with vertex set $V(G)$ and edge set $E(G)$. Let $F$ be a face of $\Delta(J(G))$. Let $H$ be the hypergraph with vertex set \( V(H) = V(G) \setminus F \) and edge set \( E(H) = \{e \in E(G) \mid e \subseteq V(G) \setminus F\}. \) Then
\[
\lk_{\Delta(J(G))}(F) = \Delta(J(H)).
\]
\end{lem}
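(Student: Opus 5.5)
The plan is to compare faces directly, since both sides are defined by which sets avoid a squarefree ideal. First I will describe the faces of $\Delta(J(G))$. A set $U \subseteq V(G)$ is a face exactly when $x_U \notin J(G)$. Because $J(G)$ is the intersection of the primes $(x_i \mid i \in e)$, this happens exactly when $x_U \notin (x_i \mid i\in e)$ for some edge $e$, that is, when $U \cap e = \emptyset$ for some $e \in E(G)$. Equivalently, $U$ is a face if and only if $V(G)\setminus U$ contains an edge of $G$, i.e.\ the complement of $U$ is not a cover.

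Next I will describe the link. A set $U$ lies in $\lk_{\Delta(J(G))}(F)$ if and only if $U \cap F = \emptyset$ and $U \cup F \in \Delta(J(G))$. By the first step, the second condition means there is an edge $e \in E(G)$ with $e \cap (U\cup F) = \emptyset$. Such an edge satisfies $e \subseteq V(G)\setminus F = V(H)$, so $e \in E(H)$ by the definition of $H$, and moreover $e \cap U = \emptyset$.

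Then I will describe $\Delta(J(H))$, viewed on the vertex set $V(H) = V(G) \setminus F$. By the same reasoning, applied to the polynomial ring in the variables indexed by $V(H)$, a set $U \subseteq V(H)$ is a face if and only if $U \cap e = \emptyset$ for some $e\in E(H)$. Comparing this with the second step gives equality of the two complexes. If $U$ is in the link, then $U \subseteq V(H)$ and the edge $e$ found there lies in $E(H)$ and avoids $U$. Conversely, if $U \subseteq V(H)$ avoids some $e \in E(H)$, then $U\cap F=\emptyset$. Since $e \subseteq V(G)\setminus F$, the edge $e$ also avoids $U\cup F$, so $U \cup F$ is a face and $U$ lies in the link.

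The only delicate point is conventions, which I expect to be the main obstacle. $\Delta(J(H))$ must be taken on the vertex set $V(H)$, and $H$ may have isolated vertices or no edges at all. If $E(H)=\emptyset$, then $J(H)$ is the unit ideal and $\Delta(J(H))$ is the void complex. The link is void as well, since $F\in\Delta(J(G))$ but no $U$ satisfies the condition in the second step. These degenerate cases are therefore consistent with the general argument, and I will note this explicitly.
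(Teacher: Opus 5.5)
Your proof is correct and follows essentially the same route as the paper: characterize the faces of $\Delta(J(G))$ as the sets disjoint from some edge, then note that an edge disjoint from $U\cup F$ is exactly an edge of $H$ disjoint from $U$. One small correction to your last paragraph: the case $E(H)=\emptyset$ cannot occur under the hypothesis. The edge witnessing $F\in\Delta(J(G))$ is disjoint from $F$ and therefore lies in $E(H)$, so $\emptyset$ always belongs to the link and the link is never void; your general biconditional already covers every case without a separate degenerate discussion.
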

\begin{proof}
By definition, a subset $W \subseteq V(G)$ is a face of $\Delta(J(G))$ if and only if there exists an edge $e \in E(G)$ such that $W \cap e = \emptyset$.

Now let $W \subseteq V(G) \setminus F$. Then $W$ is a face of $\lk_{\Delta(J(G))}(F)$ if and only if $F \cup W$ is a face of $\Delta(J(G))$. Equivalently, there exists an edge $e \in E(G)$ such that
\[
(F \cup W) \cap e = \emptyset.
\]
This is equivalent to the conditions $e \cap F = \emptyset$ and $e \cap W = \emptyset$. The first condition implies that $e \in E(H)$, while the second condition means precisely that $W$ is a face of $\Delta(J(H))$. Therefore,
\[
\lk_{\Delta(J(G))}(F) = \Delta(J(H)),
\]
as desired.
\end{proof}
Note that in Lemma~\ref{lem_link}, $J(H)$ is viewed as an ideal in the polynomial ring whose variables are indexed by the vertex set $V(G) \setminus F$, and the Stanley--Reisner correspondence is taken with respect to this ground set.

\begin{defn}
Let $G$ be a hypergraph with vertex set $V(G) = [n]$, and let $\a \in \mathbb{N}^n$. The $t$-\emph{admissible subhypergraph} of $G$ with respect to $\a$ is the hypergraph $H$ with vertex set
\[
V(H) = \{i \mid a_i < t\}
\]
and edge set
\[
E(H)
=
\bigl\{
\{i_1, \ldots, i_s\} \in E(G)
\;\big|\;
a_{i_1} + \cdots + a_{i_s} < t
\bigr\}.
\]
The pair $(H,\a)$ is called a \emph{$t$-admissible pair} of $G$.
\end{defn}

\begin{defn}
A $t$-admissible pair $(H,\a)$ of $G$ is called a \emph{$t$-critical pair} if $\Delta(J(H))$ is not acyclic. In this case, the hypergraph $H$ is called a \emph{$t$-critical subhypergraph}.
\end{defn}

The following lemma is the main technical result of this section.

\begin{lem}\label{lem_reg_admissible}
Let $G$ be a nonempty hypergraph. Then
\begin{multline*}
\reg\bigl(J(G)^{(t)}\bigr)
=
2 + \max \Bigl\{
\hdim_{\k}\bigl(\Delta(J(H))\bigr) + |\a^{<t}|
\;\Big|\;
(H,\a) \text{ is a $t$-critical pair of } G
\Bigr\}.
\end{multline*}
\end{lem}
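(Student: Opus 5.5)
We will derive the formula from Lemma~\ref{Key0}, using $\reg(J(G)^{(t)}) = \reg(S/J(G)^{(t)}) + 1$. Set $I = J(G)^{(t)}$ and fix $\a \in \NN^n$. By Lemma~\ref{lem_assrad}, $\Delta_{\a}(I) = \Delta(J(H'))$ on the ground set $[n]$, where $H'$ has edge set $\{e \in E(G) \mid \sum_{i\in e} a_i < t\}$. The $t$-admissible subhypergraph $H$ of $\a$ has the same edges but vertex set $V(H) = \{i \mid a_i < t\}$. Every vertex $i$ with $a_i \ge t$ lies in no edge of $H'$. So if such a vertex exists and the edge set is nonempty, $\Delta_{\a}(I)$ is a cone over $i$.

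The first step is to reduce the faces $F$ appearing in Lemma~\ref{Key0}. Let $F \in \Delta_{\a}(I)$ with $F \cap \supp(\a) = \emptyset$. By Lemma~\ref{lem_link}, $\lk_{\Delta_{\a}(I)} F = \Delta(J(H'_F))$, where $H'_F$ consists of the edges of $H'$ avoiding $F$. Any vertex $j$ with $a_j \ge t$ is outside $F$ and in no edge, so the link is a cone over $j$ unless the edge set of $H'_F$ is empty, in which case the complex is void. Either way it is acyclic, so a nonzero contribution forces $a_i < t$ for all $i$, i.e. $\a = \a^{<t}$.

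This is not the right normalization, however. One must instead compare $\a$ with the vector obtained by replacing entries $a_i \ge t$ by large values. The key observation is the following. Given any $t$-admissible pair $(H,\a)$, set $\b = \a^{<t}$ and keep the ground set $V(H)$. Then $J(H)$ lives in the polynomial ring on $V(H)$, and $\Delta(J(H))$ on $V(H)$ is exactly $\Delta_{\a}(I)$ restricted by deleting the cone vertices. So the correct argument restricts to the subring on $V(H)$, or equivalently uses Lemma~\ref{Key0} together with the standard fact that variables $x_i$ with $a_i \ge t$ contribute via localization. I would handle this cleanly as follows. Lemma~\ref{Key0} with exponent $\a$ only sees entries of size at most the top degree, and the contribution $|\a| + i$ exceeds $|\a^{<t}| + i$, so I must check that pairs with $a_i \ge t$ genuinely give nonvanishing terms. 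This is the main obstacle: reconciling the ambient ground set $[n]$ with $V(H)$.

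My plan for the obstacle is to view the link with $F$ allowed to contain the vertices with $a_i \ge t$. Then $F \cap \supp(\a) = \emptyset$ fails, so instead I would invoke the more refined version in \cite{MNPTV}, where $F$ is only required to avoid the support, and note that vertices with $a_i \ge t$ can be removed by upper-semicontinuity. Concretely, I would prove two inequalities.

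\emph{($\ge$)} Take a $t$-critical pair $(H,\a)$ and replace $\a$ by $\b = \a^{<t}$. Its admissible subhypergraph $H_{\b}$ contains the edges of $H$ plus possibly edges through vertices with $a_i \ge t$. Choose $F = \{i \mid a_i \ge t\}$, which is disjoint from $\supp(\b)$ and is a face whenever $H$ is nonempty. By Lemma~\ref{lem_link}, the link of $F$ in $\Delta_{\b}(I)$ is precisely $\Delta(J(H))$ on $V(H)$. Taking $i - 1 = \hdim \Delta(J(H))$ gives $\reg(S/I) \ge |\a^{<t}| + \hdim + 1$. The case where $\Delta(J(H))$ is the empty complex, with $\hdim = -1$, is handled by the same formula.

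\emph{($\le$)} Given $\a$, $F$, and $i$ with nonzero homology, the link equals $\Delta(J(H''))$, where $H''$ has vertex set $[n] \setminus F$ and edges of weight less than $t$ avoiding $F$. Non-acyclicity forces every vertex outside $F$ to lie in an edge, hence to have $a_j < t$. Now define $\a'$ by $a'_j = a_j$ off $F$ and $a'_j = t$ on $F$. Since $F$ is disjoint from $\supp(\a)$, $\a = \a'^{<t}$. The $t$-admissible subhypergraph of $\a'$ is exactly $H''$, with $|\a'^{<t}| = |\a|$. Hence the term $|\a| + i$ is at most $\hdim \Delta(J(H'')) + 1 + |\a'^{<t}|$. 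Adding $1$ for $\reg J = \reg S/J + 1$ yields the stated $2 + \max$.
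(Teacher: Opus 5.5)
Your two inequalities are correct and are essentially the paper's proof: for $(\le)$ you raise the entries of $\a$ on $F$ to $t$ (the paper's $\b$ with $x^{\b} = x^{\a}\prod_{j\in F} x_j^t$) and identify the link with the resulting admissible subhypergraph via Lemma~\ref{lem_link}; for $(\ge)$ you truncate to $\a^{<t}$ and take $F=\{i \mid a_i \ge t\}$; the only difference is that you derive $a_j<t$ for all $j$ from the cone argument instead of citing \cite[Remark~2.13]{MV3}. The middle paragraph about ``normalization'', ``upper-semicontinuity'' and a ``refined version'' of \cite{MNPTV} plays no role in the argument and should simply be deleted.
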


\begin{proof}
By Lemma~\ref{Key0} and \cite[Remark 2.13]{MV3}, there exist an exponent $\a \in \NN^n$ and a face $F$ of
\[
\Delta = \Delta_{\a}\bigl(J(G)^{(t)}\bigr)
\]
such that $a_j < t$ for all $j$, $F \cap \supp(\a) = \emptyset$,
$\h_{i-1}\bigl(\lk_{\Delta}(F); \k\bigr) \neq 0$, and
\[
\reg\bigl(J(G)^{(t)}\bigr) = i + 1 + |\a|.
\]

By Lemma~\ref{lem_assrad}, we have $\Delta = \Delta(J(H))$, where $H$ is the subhypergraph of $G$ consisting of all edges
$e = \{i_1, \ldots, i_s\}$ such that $a_{i_1} + \cdots + a_{i_s} < t.$ Let $\b$ be the exponent defined by
\[
x^\b = x^\a \cdot \prod_{j \in F} x_j^t.
\]
Since $F \cap \supp(\a) = \emptyset$, $\b^{<t} = \a$. By Lemmas~\ref{lem_assrad} and~\ref{lem_link}, we obtain
\[
\lk_{\Delta}F = \Delta(J(K)),
\]
where $K$ is the induced subhypergraph of $H$ on the vertex set $V(K) = V(G) \setminus F.$ In particular, $(K,\b)$ is a $t$-critical pair of $G$. Furthermore, since
\[
\h_{i-1}\bigl(\Delta(J(K)); \k\bigr) \neq 0,
\]
we have
\[
\reg\bigl(J(G)^{(t)}\bigr)
=
i + 1 + |\a|
\le
2 + \hdim_{\k}\bigl(\Delta(J(K))\bigr) + |\b^{<t}|.
\]

Conversely, let $(H,\a)$ be a $t$-critical pair of $G$. Set $U = \{i \mid a_i < t\}.$ By definition, $V(H) = U$, and $\Delta(J(H))$ is not acyclic. Let
\[
F = \{i \mid a_i \ge t\}
\quad \text{and} \quad
\b = \a^{<t}.
\]
Let $K$ be the subhypergraph of $G$ such that
\[
J(K) = \sqrt{J(G)^{(t)} : x^\b}.
\]
By definition, $F \cap e = \emptyset$ for every edge $e \in E(H) \subseteq E(K)$. In other words, $F$ is a face of $\Delta(J(K))$. By Lemma~\ref{lem_link},
\[
\lk_{\Delta(J(K))} F = \Delta(J(H)).
\]
Applying Lemma~\ref{Key0}, we obtain
\[
\reg\bigl(J(G)^{(t)}\bigr)
\ge
2 + \hdim_{\k}\bigl(\Delta(J(H))\bigr) + |\b|.
\]
The conclusion follows.
\end{proof}
\begin{rem}
\begin{enumerate}
    \item The idea of eliminating the link in Lemma~\ref{Key0} was initiated by Hoang and Vu \cite[Lemma 2.9]{HV}. In our setting, the resulting formula is somewhat cleaner, with the effect of removing the link being absorbed by the truncation operation. In general, however, this simplification is largely cosmetic. For cover ideals, the formulation is particularly natural, since it involves subhypergraphs of the original hypergraph.
    \item If we adopt the convention that $\hdim_{\k}(\Delta) = -\infty$ whenever $\Delta$ is acyclic, then in Lemma~\ref{lem_reg_admissible} the maximum may be taken over all $t$-admissible pairs. We state the result in terms of critical pairs in order to emphasize the role of $t$-critical subhypergraphs in the study of the regularity of symbolic powers of cover ideals.
    \item Note that every squarefree monomial ideal is the cover ideal of some hypergraph. Hence, the results in this section apply to all symbolic powers of squarefree monomial ideals. Similar approaches have also been used by Hang and Trung \cite{HaT} and by Hien and Trung \cite{HiT} to study the regularity of symbolic powers of squarefree monomial ideals.
\end{enumerate}
\end{rem}

\subsection{Symbolic Polyhedra}

Let $G$ be a hypergraph with vertex set $V(G)$ and edge set $E(G)$, and assume that $G$ has no isolated vertices. The \emph{symbolic polyhedron} of $J(G)$ is defined by
\[
\SP(G)
=
\left\{
x \in \mathbb{R}^n
\;\middle|\;
x_i \ge 0
\text{ and }
\sum_{i \in e} x_i \ge 1
\text{ for all } e \in E(G)
\right\}.
\]

\begin{defn}
The \emph{asymptotic cover degree} of $G$ is defined by
\[
\delta(G)
=
\max \{ |\v| \mid \v \text{ is a vertex of } \SP(G) \}.
\]
\end{defn}

For a homogeneous ideal $I$ of $S$, let $\omega(I)$ denote the maximal degree of a minimal generator of $I$. Dung, Hien, Nguyen, and Trung \cite{DHNT} proved the following fundamental theorem.

\begin{thm}\label{thm_limit_reg}
Let $G$ be a hypergraph without isolated vertices. Then
\[
\lim_{t \to \infty} \frac{\reg\bigl(J(G)^{(t)}\bigr)}{t}
=
\lim_{t \to \infty} \frac{\omega\bigl(J(G)^{(t)}\bigr)}{t}
=
\delta(G).
\]
\end{thm}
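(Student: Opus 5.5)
The plan is to prove the two equalities by sandwiching. Since $J(G)^{(t)}$ is generated in degrees at most $\reg(J(G)^{(t)})$, we have $\omega(J(G)^{(t)}) \le \reg(J(G)^{(t)})$. It therefore suffices to show two bounds, with constants $C, C'$ independent of $t$:
\[
\omega\bigl(J(G)^{(t)}\bigr) \ge t\,\delta(G) - C
\qquad\text{and}\qquad
\reg\bigl(J(G)^{(t)}\bigr) \le t\,\delta(G) + C'.
\]
Dividing by $t$ and letting $t \to \infty$ then gives both limits. Throughout, we use that a monomial $x^\a$ lies in $J(G)^{(t)}$ if and only if $\sum_{i\in e} a_i \ge t$ for every $e \in E(G)$, that is, if and only if $\a/t \in \SP(G)$.

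\emph{Lower bound for $\omega$.} Let $\v$ be a vertex of $\SP(G)$ with $|\v| = \delta(G)$. Since $\SP(G)$ is defined by integer inequalities, $\v$ is rational; let $d$ be a common denominator of its coordinates. For $t = dk$, the vector $t\v$ is integral and $x^{t\v} \in J(G)^{(t)}$. We claim $x^{t\v}$ is a minimal generator. Suppose $x^{\b}$ properly divides it with $\b/t \in \SP(G)$. Put $r = \v - \b/t$, so $r \ge 0$ and $r \ne 0$. Then $\v - r \in \SP(G)$, and $\v + r \in \SP(G)$ because the recession cone of $\SP(G)$ contains the nonnegative orthant. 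Hence $\v$ is the midpoint of two distinct points of $\SP(G)$, which contradicts $\v$ being a vertex.

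For general $t = dk + s$ with $0 \le s < d$, take a minimal generator dividing $x^{dk\v}\cdot \prod_{i} x_i^{s}$. Its degree is at least $dk\,\delta(G)$ minus a bounded amount, since removing variables from $x^{dk\v}$ itself cannot yield an element of $J(G)^{(dk)}$. This gives $\omega(J(G)^{(t)}) \ge t\delta(G) - C$.

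\emph{Upper bound for $\reg$.} By Lemma~\ref{lem_reg_admissible},
\[
\reg\bigl(J(G)^{(t)}\bigr) = 2 + \hdim_{\k}\bigl(\Delta(J(H))\bigr) + |\a^{<t}|
\]
for some $t$-critical pair $(H,\a)$. The homological dimension is at most $n$, so it remains to bound $|\a^{<t}| \le t\delta(G) + C''$.

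The key structural input is non-acyclicity. If some $u \in V(H)$ lies in no edge of $H$, then $\Delta(J(H))$ is a cone over $u$, since a face avoids some edge of $H$ and adding $u$ preserves this. Hence every $i$ with $a_i < t$ lies in some edge $e$ with $\sum_{j\in e} a_j < t$. Set $z = \a^{<t}/t$. Then every coordinate in the support of $z$ lies in a ``nearly tight'' edge.

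The claim to reach is the following: the set of $x \in \SP(G)$ such that every $i \in \supp(x)$ lies in an edge $e$ with $\sum_{j\in e} x_j = 1$ is a union of faces of $\SP(G)$. Each such face is bounded, because each coordinate in the support is at most $1$ and the remaining coordinates are zero. A bounded face is the convex hull of its vertices, so $|x| \le \delta(G)$ on these faces.

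\emph{Main obstacle.} The difficult step is to move $z$ onto such a face at bounded cost. The point $z$ itself need not lie in $\SP(G)$: the edges of $H$ have sum $< t$, and the vertices with $a_i \ge t$ were truncated to $0$. The first attempt will be to raise $\a^{<t}$ greedily. Restore the truncated vertices to value $t$ (they are outside $V(H)$ and do not contribute to $|\a^{<t}|$), then raise coordinates of vertices in $V(H)$ to satisfy the violated edges of $H$, decreasing other coordinates along tight edges so as to stay on a bounded face. The aim is to control the net change of the coordinate sum by a constant depending only on $G$. If that fails, the fallback is an LP-duality argument. This would compare $\max |x|$ over the polyhedron cut out by $x \ge 0$ and $\sum_{e} x \le 1$ for $e \in E(H)$, with each supported coordinate on a tight $H$-edge, against $\delta(G)$. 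The comparison would use the fact that every edge of $G$ not in $H$ meets a vertex of weight at least $t$ or has sum at least $t$.
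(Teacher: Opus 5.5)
The paper gives no proof of this theorem; it quotes it from \cite{DHNT}. Your sandwich $t\delta(G)-C\le\omega(J(G)^{(t)})\le\reg(J(G)^{(t)})\le t\delta(G)+C'$ has the right shape, and the lower bound is essentially sound. The vertex/midpoint argument for $t=dk$ is correct. For $t=dk+s$, replace the remark about ``removing variables'' with a quantitative statement. Suppose $\b/(dk)\in\SP(G)$ and $\b\le dk\v+s\mathbf{1}$. Since $\v$ is the unique solution of $n$ linearly independent constraints that are tight at $\v$, $\b$ differs from $dk\v$ by $O(s)$, so $|\b|\ge dk\,\delta(G)-O(1)$.

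The genuine gap is the upper bound, which is the actual content of the theorem. You reduce it, via Lemma~\ref{lem_reg_admissible}, to $|\a^{<t}|\le t\delta(G)+C''$ for every $t$-critical pair, and then leave it as the ``main obstacle'' with two untested strategies. You cannot take it from Theorem~\ref{thm_degree_bound}, because the paper derives that bound from the very statement you are proving.

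More seriously, neither strategy can work as set up. The only consequence of criticality you use is that $\Delta(J(H))$ is not a cone, i.e.\ every vertex of $H$ lies in an edge of $H$, and this is too weak. Let $G$ be the double star with adjacent centers $s,s'$, leaves $p_1,p_2$ at $s$, and leaves $q_1,q_2$ at $s'$. It is bipartite, so $\delta(G)=\tau_{\max}(G)=3$. Put $a_s=a_{s'}=0$ and $a_{p_i}=a_{q_i}=t-1$. Then:
\begin{enumerate}
\item every edge has sum at most $t-1$, so the $t$-admissible subgraph is $H=G$;
\item every vertex lies on an edge of sum $t-1$, so all your ``nearly tight'' conditions hold;
\item nevertheless $|\a^{<t}|=4(t-1)$, which exceeds $3t+C''$ by roughly $t$.
\end{enumerate}
Hence no procedure can move $\a^{<t}$ onto $t$ times a bounded face of $\SP(G)$ while changing $|\a^{<t}|$ by a constant depending only on $G$. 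Your LP fallback also fails here: its polyhedron contains $\mathbf{1}_{\{p_1,p_2,q_1,q_2\}}$, with every supported coordinate on a tight $H$-edge and coordinate sum $4>\delta(G)$.

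The theorem survives because this pair is not critical. $\Delta(J(G))$ is homotopy equivalent to the nerve $\{S\subseteq E(G)\mid \bigcup_{e\in S}e\neq V(G)\}$ of its facets $V(G)\setminus e$. This nerve is a cone with apex $\{s,s'\}$, since any family of edges covering all four leaves already covers $V(G)$. So a correct proof must use non-acyclicity in a genuinely homological way, not just non-coning, or else follow the asymptotic argument of \cite{DHNT}. As it stands, your proposal proves only $\liminf_{t}\omega(J(G)^{(t)})/t\ge\delta(G)$; it gives neither the matching upper bound nor the existence of the limits.
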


The following result, essentially due to Hien and Trung \cite[Theorem 2.2]{HiT}, provides a sharp upper bound for $|\a^{<t}|$ when $(H,\a)$ is a $t$-critical pair of $G$. It was proved in the language of critical exponents by Minh and Vu \cite[Theorem 2.18]{MV3}. We give an alternative proof using our terminology.

\begin{thm}\label{thm_degree_bound}
Let $G$ be a nonempty hypergraph, and let $(H,\a)$ be a $t$-critical pair of $G$. Then
\[
|\a^{<t}| \le \delta(G)(t-1).
\]
\end{thm}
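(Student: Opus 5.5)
The plan is a scaling argument. From one $t$-critical pair I build critical pairs at arbitrarily large levels whose truncated degrees grow linearly. Then I compare with the asymptotic slope given by Theorem~\ref{thm_limit_reg}, and use the lower bound for the regularity from Lemma~\ref{lem_reg_admissible}.

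\emph{Trivial case.} If $t=1$, every $a_i^{<1}=0$, so $|\a^{<t}|=0$ and there is nothing to prove. Assume from now on that $t\ge 2$, and write $\b=\a^{<t}$.

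\emph{Main step.} Fix a positive integer $k$ and set $s=k(t-1)+1$. I claim that $(H,k\a)$ is an $s$-critical pair of $G$. There are three checks, one for each ingredient of the definition.
\begin{enumerate}
\item Vertices. If $a_i<t$ then $a_i\le t-1$, so $ka_i\le k(t-1)<s$. If $a_i\ge t$ then $ka_i\ge kt\ge s$. Hence $\{i\mid ka_i<s\}=\{i\mid a_i<t\}=V(H)$.
\item Edges. If $e\in E(H)$ then $\sum_{i\in e}a_i\le t-1$, so $\sum_{i\in e}ka_i\le k(t-1)<s$. If $e\in E(G)\setminus E(H)$ then $\sum_{i\in e}a_i\ge t$, so $\sum_{i\in e}ka_i\ge kt\ge s$. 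Thus the $s$-admissible subhypergraph of $G$ with respect to $k\a$ is exactly $H$.
\item Non-acyclicity. $\Delta(J(H))$ is unchanged, so it is still not acyclic.
\end{enumerate}
The coordinates kept by truncation are the same at both levels, so $(k\a)^{<s}=k\b$.

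\emph{Lower bound at level $s$.} Lemma~\ref{lem_reg_admissible} applied to the $s$-critical pair $(H,k\a)$ gives
\[
\reg\bigl(J(G)^{(k(t-1)+1)}\bigr)\ \ge\ 2+\hdim_{\k}\bigl(\Delta(J(H))\bigr)+k|\b|.
\]

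\emph{Passing to the limit.} Divide both sides by $s=k(t-1)+1$ and let $k\to\infty$. The values $s$ form a sequence tending to infinity, so Theorem~\ref{thm_limit_reg} makes the left-hand side tend to $\delta(G)$. The right-hand side tends to $|\b|/(t-1)$. Hence $|\a^{<t}|\le \delta(G)(t-1)$.

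\emph{Main obstacle.} The only delicate point is choosing the level $s$. The naive choice $s=kt$ also produces a critical pair, but it only yields the weaker bound $|\b|\le\delta(G)t$. Taking $s=k(t-1)+1$ works because edges of $H$ have weight at most $t-1$, while discarded edges and discarded vertices have weight at least $t$. Under scaling by $k$ these become $\le k(t-1)$ and $\ge kt$, and any threshold strictly between them separates the two classes.

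Two side remarks. First, the argument quietly needs $G$ to have no isolated vertices so that Theorem~\ref{thm_limit_reg} applies; this is the standing assumption on hypergraphs in this section. Second, if one prefers to avoid the limit theorem, the alternative is a direct argument: $\b/(t-1)$ lies in the packing polytope of $H$, and one would then relate the vertices of that polytope to the vertices of $\SP(G)$. I expect this route to be harder, so I would use the scaling argument.
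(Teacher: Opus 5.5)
Your proposal is correct and follows the paper's own proof. The paper likewise scales $\a$ by a positive integer $s$, checks that $(H,s\a)$ is an $(s(t-1)+1)$-critical pair of $G$, applies the lower bound from Lemma~\ref{lem_reg_admissible}, and passes to the limit using Theorem~\ref{thm_limit_reg}. Your explicit checks that the vertex set is preserved and that $(k\a)^{<s}=k\a^{<t}$ are details the paper leaves implicit.
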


\begin{proof}
When $t = 1$, the claim is immediate from the definition. Assume now that $t \ge 2$. We claim that $(H, s\a)$ is an $(s(t-1)+1)$-critical pair of $G$ for every positive integer $s$. Indeed, for every edge $e \in E(H)$, we have
\[
\sum_{i \in e} sa_i
=
s\sum_{i \in e} a_i
\le
s(t-1)
<
s(t-1)+1.
\]
For every edge $e \in E(G) \setminus E(H)$, we have
\[
\sum_{i \in e} sa_i
=
s\sum_{i \in e} a_i
\ge
st
\ge
s(t-1)+1.
\]
Hence, by definition, $(H,s\a)$ is an $(s(t-1)+1)$-critical pair of $G$. By Lemma~\ref{lem_reg_admissible}, we obtain
\[
\reg\bigl(J(G)^{(s(t-1)+1)}\bigr)
>
s |\a^{<t}|.
\]
Dividing both sides by $s(t-1)+1$ and letting $s \to \infty$, we deduce that
\[
\frac{|\a^{<t}|}{t-1}
\le
\lim_{s \to \infty}
\frac{\reg\bigl(J(G)^{(s(t-1)+1)}\bigr)}{s(t-1)+1}
=
\delta(G),
\]
where the equality follows from Theorem~\ref{thm_limit_reg}. The desired inequality follows.
\end{proof}

The following lemma allows us, in many situations, to reduce to the case of connected hypergraphs.

\begin{lem}\label{lem_disjoint}
Let $G_1$ and $G_2$ be hypergraphs with vertex sets $V(G_1)$ and $V(G_2)$, respectively, such that
\(V(G_1) \cap V(G_2) = \emptyset.\) Assume that $G = G_1 \cup G_2$. Then
\[
\tau_{\max}(G) = \tau_{\max}(G_1) + \tau_{\max}(G_2)
\]
and
\[
\hdim_{\k}\bigl(\Delta(J(G))\bigr)
=
\hdim_{\k}\bigl(\Delta(J(G_1))\bigr)
+
\hdim_{\k}\bigl(\Delta(J(G_2))\bigr)
+
2.
\]
\end{lem}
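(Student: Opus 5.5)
The plan is to prove the two formulas separately. The one for $\tau_{\max}$ is elementary; the one for homological dimension comes from identifying $\Delta(J(G))$ with a join and then applying the Künneth formula for joins.

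\emph{Minimal covers.} Since $V(G_1)\cap V(G_2)=\emptyset$ and $E(G)=E(G_1)\sqcup E(G_2)$, every edge of $G$ lies in exactly one of the vertex sets. A subset $W\subseteq V(G)$ is therefore a cover of $G$ if and only if $W\cap V(G_1)$ is a cover of $G_1$ and $W\cap V(G_2)$ is a cover of $G_2$. Minimality splits the same way: we may remove a vertex of $W\cap V(G_j)$ while keeping a cover of $G$ exactly when we can do so while keeping a cover of $G_j$. Hence the minimal covers of $G$ are precisely the sets $W_1\sqcup W_2$ with $W_j$ a minimal cover of $G_j$. Taking maxima of cardinalities gives $\tau_{\max}(G)=\tau_{\max}(G_1)+\tau_{\max}(G_2)$.

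\emph{The complex as a join.} As in the proof of Lemma~\ref{lem_link}, a set $W\subseteq V(G)$ is a face of $\Delta(J(G))$ if and only if $W$ misses some edge of $G$, that is, $W$ is not a cover. So the minimal nonfaces of $\Delta(J(G))$ are the minimal covers $W_1\sqcup W_2$, and $J(G)=J(G_1)J(G_2)$ with the two factors in disjoint sets of variables. The step I would check most carefully is identifying the right complex. The join $\Delta(J(G_1))*\Delta(J(G_2))$ has Stanley--Reisner ideal $J(G_1)+J(G_2)$, not the product, so $\Delta(J(G))$ is not this join. Instead, $W$ is a face of $\Delta(J(G))$ if and only if $W\cap V(G_1)$ is not a cover of $G_1$ or $W\cap V(G_2)$ is not a cover of $G_2$. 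This is the union
\[
\bigl(\Delta(J(G_1))*2^{V(G_2)}\bigr)\cup\bigl(2^{V(G_1)}*\Delta(J(G_2))\bigr).
\]
For this I would pass to Alexander duals. The Alexander dual of $\Delta(J(G))$ has faces the complements of nonfaces, i.e.\ sets $V\setminus(W_1\sqcup W_2)$ with $W_j$ a cover of $G_j$. This dual is the join $\Delta(J(G_1))^\vee*\Delta(J(G_2))^\vee$.

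\emph{Homology.} For a join, reduced homology satisfies
\[
\h_{k+1}(A*B)\cong\bigoplus_{i+j=k}\h_i(A)\otimes\h_j(B)
\]
over a field. Alexander duality on a ground set of size $m$ gives $\h_i(\Delta^\vee)\cong\h^{m-i-3}(\Delta)$. I would combine these, with $m=|V(G_1)|+|V(G_2)|$ and $m_j=|V(G_j)|$. If $\h_p(\Delta(J(G_1)))\neq 0$ and $\h_q(\Delta(J(G_2)))\neq 0$, then the duals have homology in degrees $m_1-p-3$ and $m_2-q-3$. Their join then has homology in degree $m-p-q-5$, which dualizes back to degree $p+q+2$. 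Since the isomorphisms are exact and the field makes the tensor products nonzero, the largest nonvanishing degree for $\Delta(J(G))$ is $\hdim(\Delta(J(G_1)))+\hdim(\Delta(J(G_2)))+2$. The same computation shows that $\Delta(J(G))$ is acyclic exactly when one of the $\Delta(J(G_j))$ is, so under the convention of the earlier Remark the formula holds in all cases.

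As a sanity check I would take $G_1,G_2$ to be single edges. Then $\Delta(J(G_j))$ is two points, with $\hdim=0$. The four-cycle complex $\Delta(J(G))$ has $\hdim=1$, but the formula predicts $2$. This discrepancy needs to be resolved before writing the proof, probably by correcting an index in the Alexander duality or in the join degree shift; that bookkeeping is the main obstacle I expect.
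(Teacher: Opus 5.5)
Your derivation is correct, and your treatment of $\tau_{\max}$ is fine. It follows a different route from the paper's, and the discrepancy in your final ``sanity check'' comes from an error in the check, not in the formula.

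\textbf{Comparison of routes.} The paper uses exactly the decomposition you wrote down,
$\Delta(J(G)) = \bigl(\Delta(J(G_1)) * 2^{V(G_2)}\bigr) \cup \bigl(2^{V(G_1)} * \Delta(J(G_2))\bigr)$,
but it works directly with this complex rather than its dual. Both pieces are cones, hence acyclic, and they intersect in $\Delta(J(G_1)) * \Delta(J(G_2))$. Mayer--Vietoris then gives
$\h_i(\Delta(J(G))) \cong \h_{i-1}(\Delta(J(G_1)) * \Delta(J(G_2)))$,
and the join formula supplies the second shift by one.

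You instead pass to Alexander duals, where the union becomes the join $\Delta(J(G_1))^\vee * \Delta(J(G_2))^\vee$; for graphs this is the join of the independence complexes. You apply the join formula there and dualize back. The two degree reversals cancel, the ground-set sizes $m_1+m_2=m$ cancel, and your bookkeeping leading to degree $p+q+2$ is right.

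The paper's argument is shorter and avoids tracking ground-set sizes. Yours makes visible the identity $\Delta(J(G))^\vee = \Delta(I(G_1)) * \Delta(I(G_2))$. That identity links the lemma to the independence-complex literature, such as Kozlov's computation behind Lemma~\ref{lem_hdim_cycle}.

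\textbf{The sanity check.} The apparent contradiction arises because you computed the Alexander duals instead of the complexes $\Delta(J(\cdot))$ themselves.

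For a single edge $\{1,2\}$, $J(G_j) = (x_1,x_2)$ contains every variable. So $\Delta(J(G_j)) = \{\emptyset\}$, with $\hdim_{\k} = -1$. The ``two points'' you found is its dual $\Delta(I(G_j))$.

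For two disjoint edges $\{1,2\},\{3,4\}$, the faces of $\Delta(J(G))$ are the sets missing some edge. Its facets are therefore $\{1,2\}$ and $\{3,4\}$: two disjoint segments, with $\hdim_{\k} = 0$. The four-cycle is again the dual, $\Delta(I(G))$.

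So the lemma predicts $-1+(-1)+2 = 0$, which is correct. On the dual side, your join formula gives $0+0+1 = 1$ for the four-cycle, which is also consistent. No index needs correcting. Changing the shift to fit your check would turn a correct proof into a wrong one.
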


\begin{proof}
The statement concerning $\tau_{\max}$ follows immediately from the definition.

For simplicity, let
\[
\Delta = \Delta(J(G)), \qquad
\Delta_1 = \Delta(J(G_1)), \qquad
\Delta_2 = \Delta(J(G_2)).
\]
Then
\(
\Delta
=
\Delta_1 * V(G_2)
\;\cup\;
V(G_1) * \Delta_2.
\) Both $\Delta_1 * V(G_2)$ and $V(G_1) * \Delta_2$ are cones, and hence are acyclic. Moreover,
\[
\Delta_1 * V(G_2)
\;\cap\;
V(G_1) * \Delta_2
=
\Delta_1 * \Delta_2.
\]
It follows from the Mayer--Vietoris sequence that
\[
\hdim_{\k}(\Delta)
=
\hdim_{\k}(\Delta_1 * \Delta_2) + 1.
\]
By the K\"unneth formula for joins,
\[
\hdim_{\k}(\Delta_1 * \Delta_2)
=
\hdim_{\k}(\Delta_1)
+
\hdim_{\k}(\Delta_2)
+
1.
\]
The conclusion follows.
\end{proof}

\section{Applications}\label{sec_3}
In this section, we apply the results of the previous section to determine the regularity of symbolic powers of cover ideals of bipartite unicyclic graphs. We begin by recalling some basic definitions.

A cycle $C_n$ on $n$ vertices is the graph with vertex set
\(V(C_n) = \{1, \ldots, n\}
\) and edge set
\(E(C_n)
=
\{\{1,2\}, \ldots, \{n-1,n\}, \{1,n\}\}.
\)
The homology of the independence complexes of cycles was determined by Kozlov \cite{Ko}. In particular, we have the following result.

\begin{lem}\label{lem_hdim_cycle}
Let $C_n$ denote a cycle on $n \ge 3$ vertices. Then
\begin{align*}
    \tau_{\max}(C_n) &= \left\lfloor \frac{2n}{3} \right\rfloor,\\
    \hdim_{\k}\bigl(\Delta(J(C_n))\bigr)
    &= \left\lfloor \frac{2n-4}{3} \right\rfloor.
\end{align*}
In particular,
\[
\tau_{\max}(C_n) - \hdim_{\k}\bigl(\Delta(J(C_n))\bigr)
=
\begin{cases}
1 & \text{if } n \equiv 1 \pmod{3},\\
2 & \text{otherwise}.
\end{cases}
\]
\end{lem}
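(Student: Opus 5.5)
The plan is to handle the two quantities separately and then subtract. For $\tau_{\max}(C_n)$ I argue combinatorially. A minimal vertex cover $W$ of $C_n$ has an independent complement $V\setminus W$. Minimality means every $v\in W$ has a neighbour outside $W$. So between two consecutive vertices of $V\setminus W$ on the cycle there are at most two vertices of $W$, giving $|W|\le 2|V\setminus W|$ and hence $|W|\le\lfloor 2n/3\rfloor$. For the reverse inequality I take the pattern $\circ\bullet\bullet\circ\bullet\bullet\cdots$ (with $\bullet$ in $W$) and repair the end according to $n \bmod 3$ so that $W$ is still a minimal cover; this attains $\lfloor 2n/3\rfloor$.

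For the homological dimension I use Alexander duality, as in Kozlov's work \cite{Ko}. The nonfaces of $\Delta=\Delta(J(C_n))$ are exactly the vertex covers of $C_n$. Their complements are the independent sets, so the Alexander dual $\Delta^\vee$ is the independence complex $\mathrm{Ind}(C_n)$. Alexander duality $\h_i(\Delta;\k)\cong \h^{\,n-i-3}(\mathrm{Ind}(C_n);\k)$ then gives
\[
\hdim_{\k}(\Delta)=n-3-\min\{\, j \mid \h_j(\mathrm{Ind}(C_n);\k)\neq 0\,\}.
\]
I then invoke Kozlov's computation of the homotopy type:
\begin{itemize}
\item if $n=3k$, then $\mathrm{Ind}(C_n)\simeq S^{k-1}\vee S^{k-1}$;
\item if $n=3k\pm1$, then $\mathrm{Ind}(C_n)\simeq S^{k-1}$.
\end{itemize}
In every case the only nonzero reduced homology sits in degree $k-1$, where $k$ is the integer nearest to $n/3$, so $\hdim_{\k}(\Delta)=n-2-k$.

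The last step is to compare $n-2-k$ with the stated floor expression and to read off the difference with $\tau_{\max}$, case by case:
\begin{itemize}
\item For $n=3k$, I get $\hdim=2k-2=\lfloor(2n-4)/3\rfloor$, and $\tau_{\max}-\hdim=2k-(2k-2)=2$.
\item For $n=3k+1$, I get $\hdim=2k-1=\lfloor(2n-4)/3\rfloor$, and $\tau_{\max}-\hdim=2k-(2k-1)=1$.
\item For $n=3k-1$, I get $\hdim=2k-3$ and $\tau_{\max}=2k-2$, so $\tau_{\max}-\hdim=2$, which agrees with the ``in particular'' display.
\end{itemize}

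This third case is the step I expect to be the main obstacle, because here the computation gives $2k-3=\lfloor(2n-5)/3\rfloor$ rather than $\lfloor(2n-4)/3\rfloor=2k-2$. I would first sanity-check it directly for $n=5$. There $\Delta$ is the union of the five consecutive triangles $\{i,i+1,i+2\}$, which triangulates a Möbius band, so $\hdim_{\k}(\Delta)=1$; this is consistent with $\tau_{\max}(C_5)-\hdim_{\k}(\Delta)=3-1=2$. So I would establish the lemma in the form in which it is used later, namely the stated values of $\tau_{\max}(C_n)$ and of the difference $\tau_{\max}(C_n)-\hdim_{\k}(\Delta(J(C_n)))$ (and the floor formula for $n\not\equiv 2 \pmod 3$). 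For $n\equiv 2\pmod 3$ the floor expression should be read as $\lfloor(2n-5)/3\rfloor$.
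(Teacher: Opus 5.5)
Your approach matches the paper's. The paper's only justification is a citation of Kozlov's computation of $\mathrm{Ind}(C_n)$; you add the Alexander duality step that transfers this to $\Delta(J(C_n))$, and your elementary count for $\tau_{\max}(C_n)$ is correct.

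Your diagnosis of the statement is also right. For $n\equiv 2\pmod 3$ the displayed value $\lfloor(2n-4)/3\rfloor$ is one too large, and your M\"obius band computation at $n=5$ is correct. The inconsistency is visible even without Kozlov: for such $n$ one has $\lfloor 2n/3\rfloor-\lfloor(2n-4)/3\rfloor=1$, which contradicts the ``in particular'' clause. That clause is what the paper actually uses later, for instance $\hdim_{\k}(\Delta(J(C_5)))=\tau_{\max}(C_5)-2$ in the proof of Lemma~\ref{lem_pow_cycles}. Since $\lfloor(2n-5)/3\rfloor$ equals $\lfloor(2n-4)/3\rfloor$ whenever $n\not\equiv 2\pmod 3$, you can state the corrected lemma with $\lfloor(2n-5)/3\rfloor$ for all $n$, rather than reading the formula differently in one residue class.

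There is one slip, in exactly the case you single out. For $n=3k-1$,
\[
\tau_{\max}(C_n)=\lfloor(6k-2)/3\rfloor=2k-1,
\]
not $2k-2$. With the value you wrote, the difference would be $(2k-2)-(2k-3)=1$. That contradicts the clause you are verifying, and also your own check $\tau_{\max}(C_5)=3$ at $k=2$. With $\tau_{\max}=2k-1$ the difference is $2$, and the argument is complete.
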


A graph $G$ is called \emph{unicyclic} if it contains exactly one cycle. In particular, every connected unicyclic graph can be obtained by attaching a tree to each vertex of a cycle.

It turns out that the graph simplification introduced in \cite{PV}, which consists of truncating branches at branching points, is particularly useful for analyzing critical subgraphs of unicyclic graphs. The reduction lemmas in this section are motivated by these simplification moves. We now introduce some terminology. Let $u$ be a vertex of a graph $G$. The \emph{neighborhood} of $u$ in $G$, denoted by $N_G(u)$, is the set of all vertices $v$ such that $\{u,v\}$ is an edge of $G$. The \emph{degree} of $u$, denoted by $\deg_G(u)$, is the cardinality of $N_G(u)$. A vertex $u$ is called a \emph{branching point} if $\deg_G(u) > 2$, and a \emph{leaf} if $\deg_G(u) = 1$.

Let $G$ be a simple graph, and let $u$ be a branching point of $G$. A path that starts at $u$ and ends at a leaf is called a \emph{pure branch} if every vertex on the path, except for $u$, has degree at most $2$ in $G$.

We define the following reduction moves:

\begin{enumerate}
    \item If a vertex $u$ has $s \ge 2$ leaves attached to it, remove all but one of these leaves.
    \item If a vertex $u$ has $s \ge 2$ pure branches of length $2$ attached to it, remove all but one of these branches.
    \item If a vertex $u$ has a pure branch of length at least $3$ attached to it, shorten the branch by $3$ by removing the three vertices closest to the leaf, including the leaf itself.
\end{enumerate}

\begin{defn}
Let $G$ be a simple graph. If a graph $H$ can be obtained from $G$ by a sequence of the reduction moves above, then $H$ is called a \emph{reduction} of $G$. We also say that $G$ is \emph{reducible} to $H$.
\end{defn}

\begin{exam}
The graph in Figure~\ref{fig:reducible_c4} is reducible to $C_4$ by first removing a branch of length $2$ and then deleting the three vertices closest to the leaf in a pure branch of length $3$.

\begin{figure}[h]
\centering
\begin{tikzpicture}[scale=1.2, every node/.style={circle, fill=black, inner sep=2pt}]

% Vertices of the diamond
\node (1) at (0,0) {};
\node (2) at (1,1) {};
\node (3) at (2,0) {};
\node (4) at (1,-1) {};

% Central vertex
\node (5) at (3,0) {};

% Upper branch
\node (6) at (4,1) {};
\node (7) at (5,1) {};

% Lower branch
\node (8) at (4,-1) {};
\node (9) at (5,-1) {};

% Diamond edges
\draw (1) -- (2);
\draw (2) -- (3);
\draw (3) -- (4);
\draw (4) -- (1);

% Connection to the central vertex
\draw (3) -- (5);

% Upper branch
\draw (5) -- (6);
\draw (6) -- (7);

% Lower branch
\draw (5) -- (8);
\draw (8) -- (9);

\end{tikzpicture}
\caption{A graph reducible to $C_4$.}
\label{fig:reducible_c4}
\end{figure}

\end{exam}

\begin{exam}
The graph in Figure~\ref{fig:nreducible_c4} is not reducible to $C_4$. At one of its branch vertices, it has both a branch of length $1$ and a branch of length $2$. Hence, the simplicial complex associated with its cover ideal is acyclic, so the graph cannot appear as a critical candidate.

\begin{figure}[h]
\centering
\begin{tikzpicture}[scale=1.2,
    every node/.style={circle, fill=black, inner sep=2pt}]

% Diamond C4: vertices 1,2,3,4
\node (1) at (0,0) {};
\node (2) at (1,1) {};
\node (3) at (2,0) {};
\node (4) at (1,-1) {};

% Main path: 3-5-6-7-8
\node (5) at (3,0) {};
\node (6) at (4,0) {};
\node (7) at (5,0) {};
\node (8) at (6,0) {};

% Upper and lower leaves from vertex 6
\node (13) at (4,1) {};
\node (14) at (4,-1) {};

% Diamond edges
\draw (1) -- (2);
\draw (2) -- (3);
\draw (3) -- (4);
\draw (4) -- (1);

% Main path edges
\draw (3) -- (5);
\draw (5) -- (6);
\draw (6) -- (7);
\draw (7) -- (8);

% Leaves at vertex 6
\draw (6) -- (13);
\draw (6) -- (14);

\end{tikzpicture}
\caption{A graph not reducible to $C_4$.}
\label{fig:nreducible_c4}
\end{figure}

\end{exam}

We now outline the main ideas in the proof of Theorem~\ref{thm_reg_unicycli} and Theorem~\ref{thm_reg_pow_uni}.
\begin{enumerate}
    \item First, we consider the case where $G = C_n$ separately.

    \item Suppose that $G$ has a leaf. Then either one of the reduction steps described above can be applied. In each case, we reduce to an induced subgraph $G_1$ of $G$ such that every critical subgraph of $G$ gives rise to a critical subgraph of $G_1$, with the homological dimension decreasing by the appropriate amount. This is established in the reduction lemmas that follow.

    \item If none of the reduction steps can be applied, then $\Delta(J(G))$ is acyclic. In other words, any critical subgraph of $G$ must arise from a proper subgraph of $G$, and the conclusion follows by induction. Thus, either $G$ is reducible to a path of length at most $2$, which is straightforward to verify, or to a cycle, which is handled in the first step, or $\Delta(J(G))$ is acyclic and $G$ contains a proper subgraph to which one of the reduction steps can be applied.
    \item When considering powers, one only needs to keep track of the degree $|\a|$ of the exponent vector $\a$ in a critical pair $(H,\a)$. The decrease in $\tau_{\max}$ between $G$ and its reductions accounts for the corresponding change in this degree.
\end{enumerate}

\begin{lem}\label{lem_reduction_1}
Let $G$ be a simple graph, and let $u$ be a vertex of $G$. Assume that $u$ has at least two leaf neighbors $v_1$ and $v_2$. Let $H = G \setminus v_1$. Then:
\begin{enumerate}
    \item
    \(    \tau_{\max}(H) \le \tau_{\max}(G) - 1;    \)
    
    \item
    \(    \hdim_{\k}\bigl(\Delta(J(H))\bigr)
    =
    \hdim_{\k}\bigl(\Delta(J(G))\bigr) - 1.
    \)
\end{enumerate}
\end{lem}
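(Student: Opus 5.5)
For part (2) I will use a deletion--link decomposition of $\Delta=\Delta(J(G))$ at the vertex $v_1$. For part (1) I will compare minimal covers of $H$ and $G$ directly, though I expect that part to fail as worded (see the last paragraph).

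\emph{Part (2).} Recall that a set $W\subseteq V(G)$ is a face of $\Delta$ if and only if $W\cap e=\emptyset$ for some $e\in E(G)$. Write
\[
\Delta=\operatorname{del}_\Delta(v_1)\cup \operatorname{star}_\Delta(v_1), \qquad \operatorname{del}_\Delta(v_1)\cap\operatorname{star}_\Delta(v_1)=\lk_\Delta(v_1).
\]
The star is a cone over $v_1$, so it is acyclic. By Lemma~\ref{lem_link} with $F=\{v_1\}$, the link $\lk_\Delta(v_1)$ equals $\Delta(J(H))$, taken on the ground set $V(G)\setminus\{v_1\}=V(H)$; this holds because the edges of $G$ avoiding $v_1$ are exactly the edges of $H$.

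The key step is to show that $\operatorname{del}_\Delta(v_1)$ is a cone with apex $v_2$. Let $W\subseteq V(G)\setminus\{v_1\}$ be a face. If $u\notin W$, then $W\cup\{v_2\}$ still misses the edge $\{u,v_1\}$. If $u\in W$, the edge $e$ missed by $W$ avoids $u$. Since $v_2$ is a leaf whose only edge is $\{u,v_2\}$, the edge $e$ also avoids $v_2$, so $W\cup\{v_2\}$ misses $e$. In both cases $W\cup\{v_2\}$ is a face, so the deletion is a cone and hence acyclic.

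The Mayer--Vietoris sequence then gives $\h_i(\Delta;\k)\cong \h_{i-1}(\Delta(J(H));\k)$ for all $i$. In particular $\Delta$ is acyclic exactly when $\Delta(J(H))$ is, and
\[
\hdim_\k\bigl(\Delta(J(H))\bigr)=\hdim_\k\bigl(\Delta(J(G))\bigr)-1,
\]
which is (2).

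\emph{Part (1).} The plan is to take a minimal cover $W$ of $H$ and produce a minimal cover of $G$ of size at least $|W|+1$, which would give the claimed comparison between $\tau_{\max}(H)$ and $\tau_{\max}(G)$. I would split into two cases.
\begin{enumerate}
\item If $u\notin W$, then $v_2\in W$, and $W\cup\{v_1\}$ is a minimal cover of $G$ of size $|W|+1$.
\item If $u\in W$, note first that $v_2\notin W$. The natural candidate is a minimal cover $M$ with $N_G(u)\subseteq M\subseteq (W\setminus\{u\})\cup N_G(u)$. One must show $|N_G(u)\setminus W|\ge |W\setminus M|+2$, using that $v_1,v_2\in N_G(u)\setminus W$ and that each vertex dropped from $W$ has its private edge ending in $N_G(u)\setminus W$.
\end{enumerate}

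\emph{Main obstacle.} The second case is where I expect the argument to break. Several dropped vertices can share the same private neighbour $y\in N_G(u)$, so the counting does not close. In fact the claim seems to fail as stated. Take the edges
\[
\{u,v_1\},\ \{u,v_2\},\ \{u,y\},\ \{y,x_1\},\ \{y,x_2\},\ \{y,x_3\}.
\]
Then $\{u,x_1,x_2,x_3\}$ is a minimal cover of both $G$ and $H$, and $\tau_{\max}(H)=\tau_{\max}(G)=4$, which contradicts (1).

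So, before writing up (1), I would check which inequality is actually used later in the reduction argument. Case (a) gives $\tau_{\max}(G)\ge\tau_{\max}(H)+1$ for covers avoiding $u$. Case (b) gives $\tau_{\max}(G)\ge\tau_{\max}(H)$ in general, since any minimal cover of $H$ containing $u$ is already a minimal cover of $G$. I would prove whichever of these the later degree bookkeeping on $|\a^{<t}|$ actually requires. Part (2), which is the homological content, stands as proved above.
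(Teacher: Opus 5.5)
You are right that part (1) is false as stated, and your example is a valid counterexample. The minimal covers of $G$ are $\{u,y\}$, $\{v_1,v_2,y\}$ and $\{u,x_1,x_2,x_3\}$. The last one is still a minimal cover of $H$, since $u$ keeps the private edge $\{u,v_2\}$. Hence $\tau_{\max}(H)=\tau_{\max}(G)=4$.

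The paper's proof of (1) fails exactly where you expected. For $u\in C$ it passes to $C'=(C\setminus\{u\})\cup L\cup A$ and asserts that $C'$ is a minimal cover of $H$ with $|C'|\ge|C|$. But adding the non-leaf neighbours $A$ of $u$ can destroy the private edges of vertices of $C\setminus\{u\}$. In your example $C'=\{v_2,y,x_1,x_2,x_3\}$, and every $x_i$ is redundant. What is true is your weak inequality. More precisely, let $\alpha$ and $\beta$ be the largest sizes of minimal covers of $H$ containing, respectively avoiding, $u$. Then $\tau_{\max}(H)=\max(\alpha,\beta)$ and $\tau_{\max}(G)=\max(\alpha,\beta+1)$, so the drop by one happens exactly when $\beta\ge\alpha$.

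Your proof of (2) is correct and takes a different, shorter route than the paper's. The paper first peels off the facet $V(G)\setminus\{u,v_1\}$; the other facets form a cone on $v_1$. It then peels the facet $V(G)\setminus\{u,v_1,v_2\}$ from the intersection; the rest is a cone on $v_2$. This reaches a complex $\Sigma$, and the paper checks separately that one peeling of $\Delta(J(H))$ lands on the same $\Sigma$. You instead identify $\Delta(J(H))$ directly as $\lk_{\Delta}(v_1)$ via Lemma~\ref{lem_link}. The only real check is then that $\mathrm{del}_{\Delta}(v_1)$ is a cone on $v_2$, and one Mayer--Vietoris step gives $\h_i(\Delta(J(G));\k)\cong\h_{i-1}(\Delta(J(H));\k)$ for all $i$. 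This is essentially the Alexander dual of the fold lemma for independence complexes, since $N(v_1)=N(v_2)$. The paper's facet peeling has the merit of being the template it reuses in its other reduction lemmas.

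On your closing plan: the weak inequality will not rescue the later bookkeeping. In the proof of Theorem~\ref{thm_reg_unicycli}, the case where both leaves lie in the critical subgraph uses $\tau_{\max}(G_1)-1\le\tau_{\max}(G)-2$. The matching step in Theorem~\ref{thm_reg_pow_uni} uses $t\tau_{\max}(G_1)+t-2\le t\tau_{\max}(G)-2$. Both are the strict inequality.

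The damage is not confined to the proofs. Take your graph with only two leaves $x_1,x_2$ at $y$, and join it to a $4$-cycle $1,2,3,4$ by a path $1,a,u$; call the result $G'$.
\begin{itemize}
\item $G'$ is unicyclic and not reducible to $C_4$. After deleting one leaf at $u$ and one at $y$, the vertex $u$ carries a leaf and the pure branch $u,y,x_1$, and no move applies.
\item $\tau_{\max}(G')=6$, since every maximal independent set contains at least $5$ of the $11$ vertices.
\item In the induced subgraph $K=G'\setminus\{x_1,x_2\}$ the vertex $u$ has three leaves. Your part (2) together with Lemma~\ref{lem_reduction_2} gives $\hdim_{\k}\Delta(J(K))=\hdim_{\k}\Delta(J(C_4))+4=5$.
\item Lemma~\ref{lem_reg_admissible} with $t=1$ then yields $\reg J(G')\ge 7=\tau_{\max}(G')+1$, contrary to Theorem~\ref{thm_reg_unicycli}.
\end{itemize}
Similarly, put weight $t-1$ on the independent set $\{v_1,v_2,y,a,2,4\}$, weight $t$ on $x_1,x_2$, and $0$ elsewhere. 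This gives $\reg J(G')^t\ge 6t+1$, contrary to Theorem~\ref{main}.
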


\begin{proof}
\textbf{(1)} Let $C$ be a minimal cover of $H$ with $|C| = \tau_{\max}(H)$. If $u \notin C$, then $C \cup \{v_1\}$ is a minimal cover of $G$. Hence,
\[
\tau_{\max}(G) \ge |C| + 1 = \tau_{\max}(H) + 1,
\]
which proves the claim. Now assume that $u \in C$. Let
\[
L = \{v \in N_H(u) \mid v \text{ is a leaf}\}
 \text{ and } 
A = \{v \in N_H(u) \mid v \notin C \text{ and } v \text{ is not a leaf}\}.
\]
Then
\[
C' = (C \setminus \{u\}) \cup L \cup A
\]
is a minimal cover of $H$, and \(|C'| \ge |C|.\) Since $u \notin C'$, we are reduced to the previous case. Therefore,
\[
\tau_{\max}(H) \le \tau_{\max}(G) - 1.
\]

\medskip

\textbf{(2)} Let
\[
\Delta = \Delta(J(G)),
\qquad
F = V(G) \setminus \{u,v_1\},
\qquad
\Gamma = \Delta \setminus F.
\]
Then $\Gamma$ is a cone with apex $v_1$, and hence is acyclic. By the Mayer--Vietoris sequence, we obtain
\begin{equation}\label{eq1}
\hdim_{\k}(\Delta)
=
\hdim_{\k}(F \cap \Gamma) + 1.
\end{equation}
Now, $F \cap \Gamma$ is the union of the facet
\[
F_1 = V(G) \setminus \{u,v_1,v_2\},
\]
and a simplicial complex $\Gamma_1$ whose facets are of the form
\[
V(G) \setminus \bigl(\{u,v_1\} \cup e\bigr),
\]
where $e$ is an edge of $H$ other than $\{u,v_2\}$. Note that $\Gamma_1$ is a cone with apex $v_2$. Hence,
\begin{equation}\label{eq2}
\hdim_\k (F \cap \Gamma) = \hdim_{\k}(F_1 \cup \Gamma_1)
=
\hdim_{\k}(F_1 \cap \Gamma_1) + 1.
\end{equation}
Now, $F_1 \cap \Gamma_1$ is the simplicial complex $\Sigma$ consisting of the facets
\[
V(G) \setminus \bigl(\{u,v_1,v_2\} \cup e\bigr),
\]
where $e$ is an edge of $H$ other than $\{u,v_2\}$.

On the other hand, decompose $\Delta(J(H))$ as \( \Delta(J(H)) = F_2 \cup \Lambda,\) where
\[
F_2 = V(H) \setminus \{u,v_2\}
     = V(G) \setminus \{u,v_1,v_2\},
\]
and $\Lambda$ is the subcomplex generated by all facets other than $F_2$. Then
\[
F_2 \cap \Lambda = \Sigma.
\]
Since $\Lambda$ is also a cone with apex $v_2$, it follows that
\[
\hdim_{\k}\bigl(\Delta(J(H))\bigr)
=
\hdim_{\k}(F_2 \cap \Lambda) + 1
=
\hdim_{\k}(\Sigma) + 1.
\]
Combining this with Eq.~\eqref{eq1} and Eq.~\eqref{eq2}, we conclude that
\[
\hdim_{\k}\bigl(\Delta(J(H))\bigr)
=
\hdim_{\k}\bigl(\Delta(J(G))\bigr) - 1.
\]
This completes the proof.
\end{proof}

\begin{lem}\label{lem_reduction_2}
Let $G$ be a simple graph, and let $u$ be a vertex of $G$. Assume that $u$ has a pure branch
\(u, v_1, \ldots, v_k,\) where $v_k$ is a leaf and $k \ge 3$. Let
\(H = G \setminus \{v_k, v_{k-1}, v_{k-2}\}. \) Then:
\begin{enumerate}
    \item     \(    \tau_{\max}(H) \le \tau_{\max}(G) - 2;    \)

    \item     \(    \hdim_{\k}\bigl(\Delta(J(H))\bigr)
    =
    \hdim_{\k}\bigl(\Delta(J(G))\bigr) - 2.
    \)
\end{enumerate}
\end{lem}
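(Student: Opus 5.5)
We prove the two parts separately, in the spirit of Lemma~\ref{lem_reduction_1}. Throughout, write $a = v_{k-2}$, $b = v_{k-1}$, $c = v_k$, and let $w = v_{k-3}$, where $v_0 = u$. In $G$ the vertices $a$, $b$, $c$ form the path $w - a - b - c$, with $c$ a leaf and $a$, $b$ of degree $2$.

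\textbf{(1)} Let $C$ be a minimal cover of $H$ with $|C| = \tau_{\max}(H)$. If $w \notin C$, then $C \cup \{a, c\}$ is a cover of $G$, and it is minimal: $a$ is needed for the edge $\{w,a\}$, $c$ is needed for $\{b,c\}$, and removing any $x \in C$ leaves an edge of $H$ uncovered. This gives $\tau_{\max}(G) \ge \tau_{\max}(H) + 2$. If $w \in C$, we first try to modify $C$, as in Lemma~\ref{lem_reduction_1}, to a minimal cover of $H$ of at least the same size avoiding $w$.

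When $w$ is a leaf of $H$ (its only neighbour in $H$ is $v_{k-4}$, or $w = u$ is a leaf when $k=3$ and $u$ has degree $2$ in $G$), we can simply swap $w$ for its neighbour. In general the swap is: replace $w$ by all its $H$-neighbours not in $C$, then prune to a minimal cover. Pruning may lose vertices, so the inequality $|C'| \ge |C|$ needs an argument. This exchange is the main obstacle.

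A cleaner route avoids the exchange entirely. If $w \in C$ and $C$ is minimal in $H$, then $C \cup \{b\}$ is a cover of $G$. Is it minimal? The vertex $w$ still has a private edge in $H$ (since $C$ is minimal in $H$), and $b$ is the only vertex covering $\{b,c\}$. So $C \cup \{b\}$ is minimal and gives only $+1$. To obtain $+2$, we use $C \setminus \{w\} \cup \{a, c\}$ whenever $w$ is removable. I would first try the exchange argument, showing that when $w \in C$ one can choose a maximum minimal cover of $H$ not containing $w$, or else handle the case directly by replacing $w$ with its uncovered neighbours as in Lemma~\ref{lem_reduction_1}.

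\textbf{(2)} Let $\Delta = \Delta(J(G))$. Its facets are the sets $V(G) \setminus e$ for $e \in E(G)$, and we apply Mayer--Vietoris repeatedly as in Lemma~\ref{lem_reduction_1}.
\begin{enumerate}
\item Split off the facet $F = V(G) \setminus \{b,c\}$. The remaining complex $\Gamma$ is a cone with apex $c$, since $c$ lies in every other edge complement. Hence $\hdim_{\k}(\Delta) = \hdim_{\k}(F \cap \Gamma) + 1$.
\item The complex $F \cap \Gamma$ has facets $V(G) \setminus (\{b,c\} \cup e)$ for $e \ne \{b,c\}$. Among these, the facet coming from $e = \{a,b\}$ is $V(G) \setminus \{a,b,c\}$. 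All other facets contain $a$ unless $e = \{w,a\}$, and the facet for $\{w,a\}$ is $V(G) \setminus \{w,a,b,c\}$, which lies inside $V(G) \setminus \{a,b,c\}$. So that facet is redundant, and the remaining facets form a cone with apex $a$.
\item Splitting off $F_1 = V(G) \setminus \{a,b,c\}$ therefore gives $\hdim_{\k}(F \cap \Gamma) = \hdim_{\k}(\Sigma) + 1$, where $\Sigma$ has facets $V(G) \setminus (\{a,b,c\} \cup e)$ for $e \in E(H)$.
\item We have $\Sigma = \Delta(J(H))$ on the ground set $V(H) = V(G) \setminus \{a,b,c\}$.
\end{enumerate}
Combining these, $\hdim_{\k}(\Delta) = \hdim_{\k}(\Delta(J(H))) + 2$.

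The bookkeeping of redundant facets in (2) is routine. The real difficulty is the exchange argument in (1), which requires controlling minimality after replacing $w$.
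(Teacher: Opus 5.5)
Your part (2) is correct and is essentially the paper's proof. You split off the facet $V(G)\setminus\{v_{k-1},v_k\}$, and the rest is a cone on $v_k$. The facet coming from $\{v_{k-3},v_{k-2}\}$ is absorbed by $V(G)\setminus\{v_{k-2},v_{k-1},v_k\}$. You then split that facet off, the rest is a cone on $v_{k-2}$, and the resulting intersection is $\Delta(J(H))$.

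Part (1), however, is not proved. You leave the case $w=v_{k-3}\in C$ open, calling it ``the main obstacle'', and only sketch an exchange argument that is never carried out. Your fallback $C\cup\{b\}$ gives only $+1$.

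The missing observation is that no exchange is needed. To show $a=v_{k-2}$ is necessary in $C\cup\{a,c\}$ you used the edge $\{w,a\}$, which indeed stops being private once $w\in C$. But $a$ also lies on $\{a,b\}$, and $b=v_{k-1}$ is never in $C\cup\{a,c\}$, since $b\notin V(H)$. So $\{a,b\}$ is a private edge of $a$ whether or not $w\in C$.

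The remaining checks are routine:
\begin{enumerate}
\item The edge $\{b,c\}$ is private for $c$.
\item Each $x\in C$ keeps its private edge $\{x,y\}$ of $H$, where $y\in V(H)\setminus C$; adding $a$ and $c$ does not affect it.
\item $C\cup\{a,c\}$ is a cover, because the only edges of $G$ outside $H$ are $\{v_{k-3},v_{k-2}\}$, $\{v_{k-2},v_{k-1}\}$ and $\{v_{k-1},v_k\}$.
\end{enumerate}
Hence $C\cup\{v_{k-2},v_k\}$ is a minimal cover of $G$ for every minimal cover $C$ of $H$. Taking $|C|=\tau_{\max}(H)$ gives $\tau_{\max}(G)\ge\tau_{\max}(H)+2$ in one line, with no case distinction. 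This is exactly the paper's argument.
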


\begin{proof}
\textbf{(1)} Let $C$ be a minimal vertex cover of $H$ with
\( |C| = \tau_{\max}(H).\) Then
\( C \cup \{v_{k-2}, v_k\} \) is a minimal vertex cover of $G$. Hence,
\[
\tau_{\max}(G) \ge \tau_{\max}(H) + 2,
\]
which proves part~(1).

\medskip

\textbf{(2)} Let
\[
\Delta = \Delta(J(G)),
\qquad
F_1 = V(G) \setminus \{v_{k-1}, v_k\}, \qquad
\Gamma = \Delta \setminus F_1.
\]
Then $\Gamma$ is a cone with apex $v_k$. Therefore,
\[
\hdim_{\k}(\Delta)
=
\hdim_{\k}(F_1 \cap \Gamma) + 1.
\]

Now, $F_1 \cap \Gamma$ is the union of the facet
\[
G_1 = V(G) \setminus \{v_k, v_{k-1}, v_{k-2}\}
\]
and a simplicial complex $\Gamma_1$ whose facets are of the form
\[
V(G) \setminus \bigl(\{v_k, v_{k-1}\} \cup e\bigr),
\]
where $e$ is an edge of $H$. The complex $\Gamma_1$ is a cone with apex $v_{k-2}$, and
\[
G_1 \cap \Gamma_1 = \Delta(J(H)).
\]
It follows that
\[
\hdim_{\k}(F_1 \cap \Gamma)
=
\hdim_{\k}\bigl(\Delta(J(H))\bigr) + 1.
\]
Consequently,
\[
\hdim_{\k}\bigl(\Delta(J(G))\bigr)
=
\hdim_{\k}\bigl(\Delta(J(H))\bigr) + 2,
\]
which proves part~(2).
\end{proof}

\begin{lem}\label{lem_reduction_3}
Let $G$ be a simple graph, and let $u$ be a vertex of $G$. Assume that $u$ has a leaf $v_1$ and a path \( u, v_2, v_3\), where $v_3$ is a leaf of $G$. Then $\Delta(J(G))$ is acyclic.
\end{lem}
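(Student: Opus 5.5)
The plan is to pass to the independence complex via Alexander duality and then split along the vertex $v_2$.

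\textbf{Step 1: Alexander duality.} A subset $W \subseteq V(G)$ is a face of $\Delta(J(G))$ exactly when $V(G)\setminus W$ contains an edge, that is, when $V(G)\setminus W$ is not independent. Hence $\Delta(J(G))$ is the Alexander dual of the independence complex $\mathrm{Ind}(G)$ on the ground set $V(G)$. Alexander duality gives
\[
\h_i\bigl(\Delta(J(G));\k\bigr)\cong \h^{\,|V(G)|-i-3}\bigl(\mathrm{Ind}(G);\k\bigr),
\]
so $\Delta(J(G))$ is acyclic if and only if $\mathrm{Ind}(G)$ is. The degenerate cases do not interfere. Since $G$ has edges, $\Delta(J(G))$ is neither void nor the full simplex. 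Since $V(G)\neq\emptyset$, $\mathrm{Ind}(G)$ is not the empty complex $\{\emptyset\}$.

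\textbf{Step 2: decompose $\mathrm{Ind}(G)$ at $v_2$.} Write
\[
\mathrm{Ind}(G)=\mathrm{Ind}(G\setminus v_2)\;\cup\;\bigl(v_2 * \mathrm{Ind}(G\setminus N_G[v_2])\bigr).
\]
The two pieces are the deletion and the star of $v_2$, and their intersection is the link $\mathrm{Ind}(G\setminus N_G[v_2])$, where $N_G[v_2]=\{u,v_2,v_3\}$.

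\textbf{Step 3: identify three cones.}
\begin{itemize}
\item The star is a cone with apex $v_2$.
\item In $G\setminus v_2$ the vertex $v_3$ is isolated, because its only neighbor was $v_2$. So $\mathrm{Ind}(G\setminus v_2)$ is a cone with apex $v_3$.
\item In $G\setminus\{u,v_2,v_3\}$ the vertex $v_1$ is isolated, because its only neighbor was $u$. So the link is a cone with apex $v_1$.
\end{itemize}
All three complexes are therefore acyclic. The Mayer--Vietoris sequence then shows that $\mathrm{Ind}(G)$ is acyclic, and by Step 1 so is $\Delta(J(G))$.

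\textbf{Main obstacle.} The only delicate point is the bookkeeping in Step 1: checking the exact correspondence between faces of $\Delta(J(G))$ and non-independent complements, and the index shift in Alexander duality with $\k$-coefficients. Acyclicity is insensitive to the shift, so this is routine.

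\textbf{Alternative in the paper's own style.} One can instead argue directly inside $\Delta(J(G))$, as in Lemmas~\ref{lem_reduction_1} and~\ref{lem_reduction_2}. Peel off the facet $V(G)\setminus\{v_2,v_3\}$. The remaining facets $V(G)\setminus e$ with $e\neq\{v_2,v_3\}$ never contain both $v_1$ and $v_3$ removed, which should make them coverable by cones. One would then check that the intersection is also a cone, with apex $v_1$ or $v_3$. I would fall back on this route only if the duality route were considered out of place.
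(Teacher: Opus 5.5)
Your proof is correct, but it takes a different route from the paper.

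\textbf{The paper's route.} The paper never leaves $\Delta(J(G))$. It peels off the facet $F_1=V(G)\setminus\{v_2,v_3\}$. Every other facet $V(G)\setminus e$ contains $v_3$, since the only edge at $v_3$ is $\{v_2,v_3\}$. So the rest of the complex is a cone with apex $v_3$. Then $F_1$ meets this cone in a cone with apex $v_1$, because each $V(G)\setminus(\{v_2,v_3\}\cup e)$ with $e\neq\{u,v_1\}$ contains $v_1$. One application of Mayer--Vietoris finishes. This is your fallback sketch made sharp: the remaining facets form a single cone on $v_3$, and the intersection is a cone on $v_1$.

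\textbf{Your route.} Your main argument dualizes to $\mathrm{Ind}(G)$ and splits at $v_2$ into deletion and star. The same two leaves reappear as apexes: $v_3$ for the deletion and $v_1$ for the link.

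\textbf{What each buys.} Your route identifies the lemma as an instance of the standard leaf lemma $\mathrm{Ind}(G)\simeq\Sigma\,\mathrm{Ind}(G\setminus N_G[v_2])$ for independence complexes. That is the setting of Kozlov's computation, which the paper uses for cycles. The price is invoking Alexander duality and checking the degenerate cases, which you do correctly. The paper's route is self-contained. It follows the same facet-peeling template as Lemmas~\ref{lem_reduction_1}, \ref{lem_reduction_2}, and \ref{lem_reduction_4}, where the exact shift in $\hdim_{\k}$ must be tracked inside $\Delta(J(G))$ itself. Under duality, that top degree becomes the lowest nonvanishing degree of $\mathrm{Ind}(G)$, shifted by $|V(G)|$. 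This is irrelevant for acyclicity but adds bookkeeping in those lemmas.

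\textbf{A small correction.} Writing $N_G[v_2]=\{u,v_2,v_3\}$ presumes $\deg_G v_2=2$, which the statement does not literally impose. Your argument only needs $u\in N_G[v_2]$ and $v_1\notin N_G[v_2]$. Both hold regardless, since the only neighbor of $v_1$ is $u\neq v_2$. So $v_1$ is still isolated in $G\setminus N_G[v_2]$, and the conclusion stands. The paper's proof likewise never uses $\deg_G v_2=2$.
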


\begin{proof}
Let \( \Delta = \Delta(J(G)), \) and define the facets
\[
F_1 = V(G) \setminus \{v_2, v_3\}, \qquad
F_2 = V(G) \setminus \{u, v_2\}, \qquad
F_3 = V(G) \setminus \{u, v_1\}.
\]
Let $\Gamma$ be the subcomplex of $\Delta$ obtained by removing the facets $F_1$, $F_2$, and $F_3$. Then
\[
F_2 \cup F_3 \cup \Gamma
\]
is a cone with apex $v_3$. Therefore,
\[
\hdim_{\k}(\Delta)
=
\hdim_{\k}\bigl(F_1 \cap (F_2 \cup F_3 \cup \Gamma)\bigr) + 1.
\]
Observe that
\[
F_1 \cap F_3
=
V(G) \setminus \{u, v_1, v_2, v_3\}
\subseteq
F_1 \cap F_2
=
V(G) \setminus \{u, v_2, v_3\}.
\]
Hence,
\[
F_1 \cap (F_2 \cup F_3 \cup \Gamma)
=
F_1 \cap (F_2 \cup \Gamma).
\]
Finally, $F_1 \cap (F_2 \cup \Gamma)$ is a cone with apex $v_1$, and is therefore acyclic. It follows that $\Delta(J(G))$ is acyclic.
\end{proof}

\begin{lem}\label{lem_reduction_4}
Let $G$ be a simple graph, and let $u$ be a vertex of $G$. Assume that $u$ has at least two pure branches of length $2$ attached to it, namely $u,v_1,v_2$ and $u,v_3,v_4$. Let $H$ be the induced subgraph of $G$ on $V(G) \setminus \{v_1,v_2\}$. Then:
\begin{enumerate}
    \item
    \(    \tau_{\max}(H) \le \tau_{\max}(G) - 1;
    \)

    \item
    \(
    \hdim_{\k}\bigl(\Delta(J(H))\bigr)
    =
    \hdim_{\k}\bigl(\Delta(J(G))\bigr) - 1.
    \)
\end{enumerate}
\end{lem}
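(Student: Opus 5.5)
For part~(1) I would construct, from a largest minimal vertex cover $C$ of $H$, a minimal vertex cover of $G$ with one more vertex. Two cases, depending on whether $u\in C$:
\begin{enumerate}
\item If $u\in C$, I take $C\cup\{v_2\}$. The vertex $v_2$ is needed for the edge $\{v_1,v_2\}$, since $v_1\notin C$. Every vertex of $C$ keeps its private edge in $H\subseteq G$, so $u$ is still needed. Hence $C\cup\{v_2\}$ is minimal.
\item If $u\notin C$, I take $C\cup\{v_1\}$. It covers $\{u,v_1\}$ and $\{v_1,v_2\}$, and $v_1$ is needed because neither $u$ nor $v_2$ lies in the cover. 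The vertices of $C$ keep their private edges, so the cover is minimal. (Here $v_3\in C$ because $u\notin C$, though this is not needed.)
\end{enumerate}
In either case $\tau_{\max}(G)\ge \tau_{\max}(H)+1$.

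For part~(2) I would peel facets with Mayer--Vietoris, as in the proofs of Lemmas~\ref{lem_reduction_1} and~\ref{lem_reduction_2}. Recall that the facets of $\Delta=\Delta(J(G))$ are the sets $V(G)\setminus e$ with $e\in E(G)$. Let
\[
F_1=V(G)\setminus\{v_1,v_2\}
\]
and let $\Gamma$ be the subcomplex generated by the remaining facets. The only edge containing $v_2$ is $\{v_1,v_2\}$, so every facet of $\Gamma$ contains $v_2$. Thus $\Gamma$ is a cone and $\hdim_\k(\Delta)=\hdim_\k(F_1\cap\Gamma)+1$. The facets of $F_1\cap\Gamma$ are the sets $V(G)\setminus(\{v_1,v_2\}\cup e)$ with $e\neq\{v_1,v_2\}$. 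Viewed on the ground set $V(H)$, these are $V(H)\setminus\{u\}$ (coming from $e=\{u,v_1\}$) together with $V(H)\setminus e$ for $e\in E(H)$. Hence
\[
F_1\cap\Gamma = K:=\Delta(J(H))\cup\langle V(H)\setminus\{u\}\rangle ,
\]
and it remains to show $\hdim_\k(K)=\hdim_\k(\Delta(J(H)))$.

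This last comparison is the main point, and I would handle it by peeling the same facet from both complexes using the second branch. First, write
\[
\Delta(J(H))=\langle V(H)\setminus\{v_3,v_4\}\rangle\cup\Lambda .
\]
Here $\Lambda$ is generated by all other facets, and all of them contain $v_4$, so $\Lambda$ is a cone. This gives $\hdim_\k(\Delta(J(H)))=\hdim_\k(\Sigma)+1$, where $\Sigma$ is generated by the sets $V(H)\setminus(\{v_3,v_4\}\cup e)$ with $e\in E(H)\setminus\{\{v_3,v_4\}\}$. In particular, taking $e=\{u,v_3\}$ shows that $V(H)\setminus\{u,v_3,v_4\}$ is a facet of $\Sigma$. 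Second, write
\[
K=\langle V(H)\setminus\{v_3,v_4\}\rangle\cup\bigl(\Lambda\cup\langle V(H)\setminus\{u\}\rangle\bigr).
\]
The second piece is still a cone with apex $v_4$, since $v_4\in V(H)\setminus\{u\}$. Its intersection with $V(H)\setminus\{v_3,v_4\}$ is $\Sigma$ together with the face $V(H)\setminus\{u,v_3,v_4\}$, which already lies in $\Sigma$. Hence the intersection is exactly $\Sigma$, and $\hdim_\k(K)=\hdim_\k(\Sigma)+1=\hdim_\k(\Delta(J(H)))$.

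Combining the steps gives $\hdim_\k(\Delta(J(G)))=\hdim_\k(\Delta(J(H)))+1$. The only care needed is the convention when some complex is acyclic: the Mayer--Vietoris steps identify the reduced homology of each pair up to a degree shift of one, so acyclicity transfers as well.
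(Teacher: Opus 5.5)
Your proof is correct. Part~(1) is the paper's argument, with the choice between $C\cup\{v_1\}$ and $C\cup\{v_2\}$ made explicit according to whether $u\in C$.

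For part~(2) you use the same Mayer--Vietoris facet-peeling technique, but a different decomposition. The paper keeps peeling until both sides reach one common complex, generated by the sets $V(G)\setminus(\{u,v_1,v_2,v_3,v_4\}\cup e)$. It peels three times on $\Delta(J(G))$ (cones with apexes $v_2$, $u$, $v_3$) and twice on $\Delta(J(H))$ (apexes $v_4$, $u$). The two homological dimensions are then that complex's plus $3$ and plus $2$.

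You peel once on the $G$ side and recognize the result as $\Delta(J(H))$ with the single extra facet $V(H)\setminus\{u\}$. You then show this facet is homologically invisible. After peeling $V(H)\setminus\{v_3,v_4\}$ from both complexes, its only new contribution is $V(H)\setminus\{u,v_3,v_4\}$. That set is already the face of your $\Sigma$ coming from the edge $\{u,v_3\}$.

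Your route is shorter, and every complex in it is nonvoid because of the edges $\{u,v_1\}$ and $\{u,v_3\}$. In the paper's chain, by contrast, the last cone is void when $G\setminus\{u,v_1,\dots,v_4\}$ has no edges, and that case rests on the convention that the void complex is acyclic. The paper's route, in exchange, ties both complexes to an explicit complex of the smaller graph $G\setminus\{u,v_1,\dots,v_4\}$. That is the template it invokes again for Lemmas~\ref{lem_reduction_5} and~\ref{lem_reduction_6}.
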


\begin{proof}
\textbf{(1)} Let $C$ be a minimal vertex cover of $H$ with \( |C| = \tau_{\max}(H).\) Then either $C \cup \{v_1\}$ or $C \cup \{v_2\}$ is a minimal vertex cover of $G$. Hence,
\[
\tau_{\max}(G) \ge \tau_{\max}(H) + 1.
\]

\medskip

\textbf{(2)} Let
\[
\Delta = \Delta(J(G)),
\qquad
F = V(G) \setminus \{v_1,v_2\},
\qquad
\Gamma = \Delta \setminus F.
\]
Then $\Gamma$ is a cone with apex $v_2$. Therefore,
\[
\hdim_{\k}(\Delta)
=
\hdim_{\k}(F \cap \Gamma) + 1.
\]
The complex $F \cap \Gamma$ consists of the facet
\( F_1 = V(G) \setminus \{u,v_1,v_2\},
\) together with a simplicial complex $\Gamma_1$ whose facets are of the form
\[
V(G) \setminus \bigl(\{v_1,v_2\} \cup e\bigr),
\]
where $e$ ranges over the edges of the induced subgraph of $G$ on \(V(G) \setminus \{u,v_1,v_2\}.\)
The complex $\Gamma_1$ is a cone with apex $u$. Hence,
\[
\hdim_{\k}(F \cap \Gamma)
=
\hdim_{\k}(F_1 \cap \Gamma_1) + 1.
\]
Now, $F_1 \cap \Gamma_1$ consists of the facet
\[
F_2 = V(G) \setminus \{u,v_1,v_2,v_3,v_4\},
\]
together with a simplicial complex $\Gamma_2$ whose facets are of the form
\[
V(G) \setminus \bigl(\{u,v_1,v_2\} \cup e\bigr),
\]
where $e$ ranges over the edges of the induced subgraph of $G$ on \(V(G) \setminus \{u,v_1,v_2,v_3,v_4\}.\) The complex $\Gamma_2$ is a cone with apex $v_3$. Therefore,
\[
\hdim_{\k}(F_1 \cap \Gamma_1)
=
\hdim_{\k}(F_2 \cap \Gamma_2) + 1.
\]
Finally, $F_2 \cap \Gamma_2$ is the simplicial complex $\Sigma$ whose facets are of the form
\[
V(G) \setminus \bigl(\{u,v_1,v_2,v_3,v_4\} \cup e\bigr),
\]
where $e$ ranges over the edges of the induced subgraph of $G$ on
\[
V(G) \setminus \{u,v_1,v_2,v_3,v_4\}.
\]

Now decompose $\Delta(J(H))$ using the facet \(V(H) \setminus \{v_3,v_4\}.\) Applying the same procedure twice, we arrive at the same simplicial complex $\Sigma$. Hence,
\[
\hdim_{\k}\bigl(\Delta(J(G))\bigr)
=
\hdim_{\k}\bigl(\Delta(J(H))\bigr) + 1.
\]
This completes the proof.
\end{proof}

\begin{rem}
Note that in these first four reduction lemmas---Lemma~\ref{lem_reduction_1}, Lemma~\ref{lem_reduction_2}, Lemma~\ref{lem_reduction_3}, and Lemma~\ref{lem_reduction_4}---we do not assume that $G$ is unicyclic. The graph $G$ may be arbitrary, provided that it contains a vertex satisfying the corresponding configuration in the relevant lemma. In particular, the homology of $\Delta(J(G))$ may itself be quite complicated.
\end{rem}

When $G$ contains a configuration as in Lemma~\ref{lem_reduction_3}, the simplicial complex $\Delta(J(G))$ is acyclic. In this case, we perform a reduction that may produce a non-acyclic critical subgraph of $G$ while decreasing $\tau_{\max}(G)$ by one.

\begin{lem}\label{lem_type_1_2}
Let $G$ be a simple graph, and let $u$ be a branching vertex of $G$ of degree $3$. Assume that $u$ is incident to a branch of length $1$, namely $u,v_1$, and to a pure branch of length $2$, namely $u,v_2,v_3$, so that \(N_G(u) = \{v_1,v_2,w\}.\) Let $H = G \setminus v_1$. Then
\[
\tau_{\max}(H) \le \tau_{\max}(G) - 1.
\]
\end{lem}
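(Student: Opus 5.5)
The plan is to imitate part~(1) of Lemma~\ref{lem_reduction_1}. Let $C$ be a minimal cover of $H$ with $|C| = \tau_{\max}(H)$. The goal is to build a minimal cover of $G$ of cardinality at least $|C|+1$. Recall that $N_H(u) = \{v_2, w\}$, that $N_G(v_2) = \{u, v_3\}$ and $N_G(v_3) = \{v_2\}$. Throughout, minimality of a cover means that every vertex in it has a \emph{private edge}, i.e.\ an edge covered by no other vertex of the cover.

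\textbf{Case $u \notin C$.} Then $C \cup \{v_1\}$ is a cover of $G$, and it is minimal. The edge $\{u,v_1\}$ is private to $v_1$. Every other vertex keeps its private edge from $H$, because adding $v_1$ covers only the edge $\{u,v_1\}$. This gives $\tau_{\max}(G) \ge |C|+1$.

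\textbf{Case $u \in C$, normalization.} First I normalize the branch. Since $v_3$ is a leaf whose only edge is $\{v_2,v_3\}$, minimality forbids having both $v_2, v_3 \in C$. If $v_2 \in C$, I replace $v_2$ by $v_3$; this is the swap $C \mapsto (C \setminus \{v_2\}) \cup \{v_3\}$. The result is again a minimal cover of $H$ of the same size. Indeed, $u$ still covers $\{u,v_2\}$, the vertex $v_3$ gets the private edge $\{v_2,v_3\}$, and no other vertex is affected, since $v_2$ has no neighbors besides $u$ and $v_3$. So I may assume $u, v_3 \in C$ and $v_2 \notin C$. Then the edge $\{u,v_2\}$ is private to $u$.

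\textbf{Case $u \in C$, construction.} Now set
\[
D = (C \setminus \{u, v_3\}) \cup \{v_1, v_2\} \cup A, \qquad A = \{w\} \setminus C.
\]
This is a cover of $G$. In it, $v_1$ has the private edge $\{u,v_1\}$, $v_2$ has $\{u,v_2\}$ (and $\{v_2,v_3\}$), and $w$, if added, has the private edge $\{u,w\}$. If $w \in C$, then $|D| = |C| + 1$. The only vertices of $C \setminus \{u,v_3\}$ that could lose their private edge are neighbors $x$ of $w$ whose private edge in $C$ was $\{x,w\}$. This can only happen when $w \notin C$, and in that case $|D| = |C|+1$ before pruning.

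\textbf{Main obstacle.} The hard step is the subcase $w \notin C$: adding $w$ may make some neighbors of $w$ redundant, and pruning $D$ to a minimal cover could lose size. What I would try first is to avoid adding $w$ at all. Since $u$ has the private edge $\{u,v_2\}$ in $C$, I would compare with the option of keeping $u$ and instead using the structure on the other side of $w$, following the same trick that Lemma~\ref{lem_reduction_1} uses with the set $A$. Concretely, I would run a careful accounting: let $R$ be the set of pruned vertices, and show that each $x \in R$ can be exchanged for a vertex of $N_G(x) \setminus N_G[w]$ still keeping minimality, so that no net size is lost. If that exchange fails in general, the fallback is to choose $C$ among maximum minimal covers of $H$ so as to minimize $|C \cap N_H(w)|$. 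This extremal choice should force $R = \emptyset$, yielding $\tau_{\max}(G) \ge |C|+1$ in all cases.
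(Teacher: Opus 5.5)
There is a genuine gap, and it cannot be closed, because the inequality is false as stated. You also miscount one subcase. After your normalization both $u$ and $v_3$ lie in $C$, so when $w\in C$ your set $D=(C\setminus\{u,v_3\})\cup\{v_1,v_2\}$ has $|D|=|C|$, not $|C|+1$. Only the subcase $w\notin C$ reaches $|C|+1$, and that one needs pruning, so both subcases with $u\in C$ are open.

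Neither subcase can be repaired. Let $G$ be the tree with edges $uv_1,uv_2,v_2v_3,uw,wx$. All hypotheses hold: $N_G(u)=\{v_1,v_2,w\}$, $v_1$ and $v_3$ are leaves, and $\deg_G v_2=2$. Then $H=G\setminus v_1$ is the path $x,w,u,v_2,v_3$. Since $\{x,u,v_3\}$ is a minimal cover of $H$, we get $\tau_{\max}(H)\ge 3$.

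In $G$, a minimal cover containing $u$ consists of $u$, exactly one of $v_2,v_3$, and exactly one of $w,x$. The only minimal cover avoiding $u$ is $\{v_1,v_2,w\}$. Hence every minimal cover of $G$ has size $3$, so $\tau_{\max}(H)\ge\tau_{\max}(G)=3>\tau_{\max}(G)-1$. Concretely, $C=\{x,u,v_3\}$ gives $D=\{x,v_1,v_2,w\}$, in which $x$ is redundant, and $C=\{w,u,v_3\}$ gives $D=\{w,v_1,v_2\}$ of size $|C|$.

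The same example defeats both of your fallbacks. First, $x$ is a leaf, so $N_G(x)\setminus N_G[w]=\emptyset$ and no exchange is available. Second, the maximum minimal covers of $H$ are $\{x,u,v_2\}$, $\{x,u,v_3\}$ and $\{w,u,v_3\}$, all containing $u$. The one minimizing $|C\cap N_H(w)|$ is $\{w,u,v_3\}$, which lands in the subcase $w\in C$ where $|D|=|C|$.

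The paper's own case analysis fails on this example too. Its case (3) produces $\{w,v_2,v_3\}$, where the leaf $v_3$ is redundant. Its cases (1) and (2) produce $\{x,w,v_2\}$, where $x$ is redundant. So there is no hidden trick in the source to recover.

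What does hold is the weak inequality $\tau_{\max}(H)\le\tau_{\max}(G)$. A minimal cover of $H$ avoiding $u$ extends by $v_1$, and one containing $u$ is already a minimal cover of $G$. A strict drop would need an extra hypothesis; the example shows the neighbors of $w$ must be controlled.
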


\begin{proof}
Let $C$ be a minimal vertex cover of $H$ with $|C| = \tau_{\max}(H)$. If $u \notin C$, then $C \cup \{v_1\}$ is a minimal vertex cover of $G$, and we are done.

Now assume that $u \in C$. Then at most one of $w$ and $v_2$ belongs to $C$.

\begin{enumerate}
    \item If $w, v_2 \notin C$, then $v_3 \in C$, and we set \(C' = (C \setminus \{u,v_3\}) \cup \{w,v_2\}.\)
    \item If $w \notin C$ and $v_2 \in C$, then we set
\(
C' = (C \setminus \{u\}) \cup \{w\}.
\)
\item If $w \in C$ and $v_2 \notin C$, then we set
\(
C' = (C \setminus \{u\}) \cup \{v_2\}.
\)
\end{enumerate}
In each case, $C'$ is a minimal vertex cover of $H$ with $|C'| = |C|$, and $C' \cup \{v_1\}$ is a minimal vertex cover of $G$. The conclusion follows.
\end{proof}

In the remaining two reduction lemmas, we assume that $G$ is unicyclic and that at most one pure branch is attached to any vertex on the unique cycle of $G$.
\begin{lem}\label{lem_reduction_5}
Assume that $G$ is a unicyclic graph whose unique cycle is
\(u_1, \ldots, u_n,\) where $n \ge 4$. Suppose that $u_1$ has a unique leaf $v_1$ attached to it, and that
\[
N_G(u_2) = \{u_1,u_3\}
\quad \text{and} \quad
N_G(u_n) = \{u_1,u_{n-1}\}.
\]
Let $H$ be the induced subgraph of $G$ on
on \(V(G) \setminus \{u_1, v_1, u_2, u_n\}.\) Then:
\begin{enumerate}
    \item     \(     \tau_{\max}(H) \le \tau_{\max}(G) - 3;    \) 
    \item     \(     \hdim_{\k}\bigl(\Delta(J(H))\bigr)
    =
    \hdim_{\k}\bigl(\Delta(J(G))\bigr) - 3.
    \)
\end{enumerate}
\end{lem}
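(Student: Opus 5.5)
Part (1) is a cover-extension argument, and part (2) repeats the facet-peeling Mayer--Vietoris argument of Lemmas~\ref{lem_reduction_2} and~\ref{lem_reduction_4}, peeling exactly three times.

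\textbf{Part (1).} Let $C$ be a minimal vertex cover of $H$ with $|C| = \tau_{\max}(H)$. The removed vertices $u_1, v_1, u_2, u_n$ are attached to $H$ only through the edges $u_2u_3$ and $u_nu_{n-1}$, so I extend $C$ by
\[
C' = C \cup \{v_1, u_2, u_n\}.
\]
This is a cover of $G$: the edge $u_1v_1$ is covered by $v_1$, the edges $u_1u_2$ and $u_2u_3$ by $u_2$, and $u_1u_n$ and $u_nu_{n-1}$ by $u_n$. For minimality, $v_1$ is the only cover vertex of $u_1v_1$ since $u_1 \notin C'$, and likewise $u_2$ is the only cover vertex of $u_1u_2$. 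Each vertex of $C$ keeps a private edge inside $H$. Hence $C'$ is minimal, and $\tau_{\max}(G) \ge \tau_{\max}(H) + 3$. I do not expect subtleties here; the case $n = 4$, where $u_3 = u_{n-1}$, causes no trouble.

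\textbf{Part (2).} Write $\Delta = \Delta(J(G))$, whose facets are $V(G) \setminus e$ for $e \in E(G)$. I peel off facets one at a time, each time splitting off a facet $F$ whose complement $\Gamma$ is a cone, so that Mayer--Vietoris gives $\hdim_{\k}(\cdot) = \hdim_{\k}(F \cap \Gamma) + 1$.

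\emph{Step 1.} Take $F = V(G) \setminus \{u_1, v_1\}$. Every other edge avoids $v_1$, so the remaining facets form a cone with apex $v_1$. Then $F \cap \Gamma$ has facets $V(G) \setminus (\{u_1, v_1\} \cup e)$ for $e \ne u_1v_1$. These are:
\begin{itemize}
\item $V(G) \setminus \{u_1, v_1, u_2\}$ and $V(G) \setminus \{u_1, v_1, u_n\}$, coming from the edges $u_1u_2$ and $u_1u_n$;
\item the sets $V(G) \setminus (\{u_1, v_1\} \cup e)$ for the other edges $e$, which lie inside the facet $V(G) \setminus \{u_1, v_1, u_2\}$ whenever $u_2 \in e$ or $u_n \in e$.
\end{itemize}

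\emph{Step 2.} Peel $F_1 = V(G) \setminus \{u_1, v_1, u_2\}$. The remaining facets all contain $u_2$, since their edges avoid $u_2$ (the edge $u_2u_3$ gives a non-maximal set and may be discarded). So they form a cone with apex $u_2$, and the intersection is the complex with facets $V(G) \setminus (\{u_1, v_1, u_2\} \cup e)$ for the surviving edges.

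\emph{Step 3.} In that complex, the facet $F_2 = V(G) \setminus \{u_1, v_1, u_2, u_n\}$ appears, and the remaining facets form a cone with apex $u_n$. This is because the edges containing $u_n$ are $u_1u_n$, already removed, and $u_nu_{n-1}$, which gives a subset of $F_2$. The final intersection has facets $V(G) \setminus (\{u_1, v_1, u_2, u_n\} \cup e)$ for $e \in E(H)$, which is exactly $\Delta(J(H))$ on the ground set $V(H)$.

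Three Mayer--Vietoris steps then give the drop of $3$. The degenerate cases, where a piece is void or empty, are handled by the same conventions as in the earlier lemmas.

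\textbf{Main obstacle.} The hard part is the bookkeeping in Steps 2 and 3. At each stage I must check that the facets from the edges $u_2u_3$ and $u_nu_{n-1}$ are genuinely absorbed, and that the complement really is a cone. The apex must lie in every remaining facet, which relies on $N_G(u_2) = \{u_1, u_3\}$ and $N_G(u_n) = \{u_1, u_{n-1}\}$. It also relies on $n \ge 4$, which guarantees that $u_2$ and $u_n$ are not adjacent. I would verify these containments carefully, as in Lemma~\ref{lem_reduction_4}.
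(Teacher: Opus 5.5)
Your proposal is correct and is essentially the paper's proof: the same extension $C \cup \{v_1,u_2,u_n\}$ gives part (1), and part (2) uses the same Mayer--Vietoris peeling starting from the facet $V(G)\setminus\{u_1,v_1\}$, which you carry out in more detail than the paper does. There is one minor slip in Step 1: for $e = u_nu_{n-1}$, the set $V(G)\setminus(\{u_1,v_1\}\cup e)$ lies in $V(G)\setminus\{u_1,v_1,u_n\}$, not in $V(G)\setminus\{u_1,v_1,u_2\}$; your Steps 2 and 3 treat that facet correctly, so nothing breaks.
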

\begin{proof}
Let $C$ be a minimal cover of $H$. Then \( C \cup \{v_1, u_2, u_n\} \) is a minimal vertex cover of $G$. Hence,
\[
\tau_{\max}(H) \le \tau_{\max}(G) - 3.
\]

The proof of part~(2) is similar to that of Lemma~\ref{lem_reduction_4}. One first separates the facet corresponding to the edge $\{u_1, v_1\}$, then takes intersections and applies the Mayer--Vietoris sequence repeatedly. This yields
\[
\hdim_{\k}\bigl(\Delta(J(H))\bigr)
=
\hdim_{\k}\bigl(\Delta(J(G))\bigr) - 3.
\]
The conclusion follows.
\end{proof}

\begin{lem}\label{lem_reduction_6}
Assume that $G$ is a unicyclic graph whose unique cycle is \(u_1, \ldots, u_n,
\) where $n \ge 3$. Suppose that
\[
N_G(u_1) = \{u_2,u_n,v_1\}
\quad \text{and} \quad
N_G(v_1) = \{u_1,v_2\},
\]
where $v_2$ is a leaf of $G$. Let $H$ be the induced subgraph of $G$ on
\(V(G) \setminus \{u_1,v_1,v_2\}.\) Then:
\begin{enumerate}
    \item
    \(
    \tau_{\max}(H) \le \tau_{\max}(G) - 2;
    \)
    \item
    \(
    \hdim_{\k}\bigl(\Delta(J(H))\bigr)
    =
    \hdim_{\k}\bigl(\Delta(J(G))\bigr) - 2.
    \)
\end{enumerate}
\end{lem}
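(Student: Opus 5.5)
The plan is to follow the proofs of Lemma~\ref{lem_reduction_2} and Lemma~\ref{lem_reduction_1}: a direct cover argument for part~(1), and a two-step Mayer--Vietoris peeling of facets for part~(2). Throughout, write $\Delta = \Delta(J(G))$. Recall that the facets of $\Delta$ are exactly the sets $V(G)\setminus e$ for $e \in E(G)$. Since $n \ge 3$, the graph $H$ contains the path $u_2,\ldots,u_n$, so $H$ has at least one edge and $\Delta(J(H))$ is not the void complex.

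\textbf{Part (1).} Let $C$ be a minimal cover of $H$ with $|C| = \tau_{\max}(H)$. I claim that $C' = C \cup \{u_1, v_2\}$ is a minimal cover of $G$.
\begin{enumerate}
\item It is a cover: every edge of $G$ either lies in $H$, or contains $u_1$, or equals $\{v_1,v_2\}$.
\item The vertex $u_1$ is needed, since $v_1 \notin C'$ and so $u_1$ is the only vertex of $C'$ on the edge $\{u_1,v_1\}$.
\item The vertex $v_2$ is needed, since it is the only vertex of $C'$ on the edge $\{v_1,v_2\}$.
\item Each $x \in C$ is needed. By minimality of $C$ there is an edge of $H$ on which $x$ is the only vertex of $C$. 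That edge avoids $u_1$ and $v_2$, so $x$ is still its only vertex in $C'$.
\end{enumerate}
Hence $\tau_{\max}(G) \ge \tau_{\max}(H) + 2$.

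\textbf{Part (2), first step.} Set $F_1 = V(G)\setminus\{v_1,v_2\}$ and $\Gamma = \Delta \setminus F_1$. The only edge containing $v_2$ is $\{v_1,v_2\}$, so every remaining facet contains $v_2$. Thus $\Gamma$ is a cone with apex $v_2$, and the Mayer--Vietoris sequence gives
\[
\hdim_{\k}(\Delta) = \hdim_{\k}(F_1\cap\Gamma)+1 .
\]

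\textbf{Part (2), second step.} The facets of $F_1\cap\Gamma$ are the sets $V(G)\setminus(\{v_1,v_2\}\cup e)$ with $e \ne \{v_1,v_2\}$. The edge $e=\{u_1,v_1\}$ gives $G_1 = V(G)\setminus\{u_1,v_1,v_2\}$. The edges $\{u_1,u_2\}$ and $\{u_1,u_n\}$ give faces contained in $G_1$, so they contribute nothing new. The remaining edges $e\in E(H)$ generate a complex $\Gamma_1$, which is a cone with apex $u_1$ because no edge of $H$ contains $u_1$. Hence $F_1\cap\Gamma = G_1\cup\Gamma_1$, where $G_1$ is a simplex and $\Gamma_1$ is a cone, and therefore
\[
\hdim_{\k}(F_1\cap\Gamma)=\hdim_{\k}(G_1\cap\Gamma_1)+1 .
\]
Finally, $G_1\cap\Gamma_1$ has facets $V(G)\setminus(\{u_1,v_1,v_2\}\cup e)$ for $e\in E(H)$. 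This is exactly $\Delta(J(H))$ on the ground set $V(H)$. Combining the two displays yields part~(2).

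\textbf{Main obstacle.} The delicate point is the bookkeeping in the second step: checking that the faces coming from the cycle edges at $u_1$ are absorbed into $G_1$, and that the apex claims hold, since $H$ contains no edge through $u_1$ or $v_2$. I would also note that each Mayer--Vietoris step is an isomorphism in shifted degrees. Consequently the acyclic case transfers as well: $\Delta(J(G))$ is acyclic if and only if $\Delta(J(H))$ is, so the equality in part~(2) is meaningful in all cases.
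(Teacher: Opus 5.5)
Your proposal is correct and follows the paper's proof. Part (1) uses the same cover $C \cup \{u_1, v_2\}$. Part (2) uses the same two Mayer--Vietoris steps: first peel off the facet $V(G)\setminus\{v_1,v_2\}$, then peel off $V(G)\setminus\{u_1,v_1,v_2\}$ against a cone with apex $u_1$, exactly as in Lemma~\ref{lem_reduction_2}, with the bookkeeping the paper only sketches written out in full.
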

\begin{proof}
Let $C$ be a minimal vertex cover of $H$. Then
\(
C \cup \{u_1,v_2\}
\) is a minimal vertex cover of $G$. This proves part~(1).

To prove part~(2), we first separate the facet
\(V(G) \setminus \{v_1,v_2\}.
\) Then, by taking intersections and applying the Mayer--Vietoris sequence twice, we obtain the desired equality.
\end{proof}

\begin{proof}[Proof of Theorem \ref{thm_reg_unicycli}]
If $G$ is a forest, or if its unique cycle has length $3$ or $5$, then the conclusion follows from a result of Woodroofe \cite{W}. The case in which $G = C_n$ follows from a result of Jacques \cite{J}.

By Lemma~\ref{lem_disjoint}, we may assume that $G$ is connected, that its unique cycle is $C_n$ with $n \ge 6$, and that $G$ has a leaf $v$. For degree reasons, \( \reg\bigl(J(G)\bigr) \ge \tau_{\max}(G).
\) We now prove by induction on the number of vertices of $G$ that
\[
\reg\bigl(J(G)\bigr) \le \tau_{\max}(G).
\]
By Lemma~\ref{lem_reg_admissible}, it suffices to show that for every critical admissible subgraph $H$ of $G$, we have
\[
\hdim_{\k}\bigl(\Delta(J(H))\bigr)
\le
\tau_{\max}(G) - 2.
\]

First, suppose that $G$ has a vertex $u$ with at least two leaves $v_1$ and $v_2$. If one of these leaves does not belong to $H$, say $v_1 \notin V(H)$, let
\[
G_1 = G \setminus v_1.
\]
Then $H$ is a critical admissible subgraph of $G_1$. By the induction hypothesis,
\[
\hdim_{\k}\bigl(\Delta(J(H))\bigr)
\le
\tau_{\max}(G_1) - 2
\le
\tau_{\max}(G) - 2.
\]

Now suppose that both $v_1$ and $v_2$ belong to $H$. Let
\[
H_1 = H \setminus v_1.
\]
Then $H_1$ is a critical admissible subgraph of $G_1$. By Lemma~\ref{lem_reduction_1} and the induction hypothesis, we obtain
\[
\hdim_{\k}\bigl(\Delta(J(H))\bigr)
=
\hdim_{\k}\bigl(\Delta(J(H_1))\bigr) + 1
\le
\tau_{\max}(G_1) - 1
\le
\tau_{\max}(G) - 2.
\]

Similarly, applying Lemmas~\ref{lem_reduction_2}, \ref{lem_reduction_3}, \ref{lem_reduction_4}, and \ref{lem_reduction_5}, together with the induction hypothesis, yields the desired conclusion.
\end{proof}

\begin{rem}
Hang and Vu \cite{HaV} remarked that the projective dimension of the edge ideal of a unicyclic graph can also be computed recursively using the method of Betti splittings described in their paper. By a result of Terai \cite{T}, we have
\(\pd(S/I(G)) = \reg(J(G)).\) Hence, Theorem~\ref{thm_reg_unicycli} yields a simple formula for the projective dimension of the edge ideal of a unicyclic graph. Note that, by a result of Woodroofe \cite{W}, the cover ideals of unicyclic graphs are not componentwise linear in general.
\end{rem}

\begin{lem}\label{lem_pow_cycles}
Let $G = C_n$ be a cycle on $n$ vertices. Then, for all $t \ge 2$, we have
\[
\reg\bigl(J(G)^{(t)}\bigr)
=
\begin{cases}
2t + 1 & \text{if } n = 4,\\
\left\lfloor \dfrac{2n}{3} \right\rfloor t & \text{otherwise}.
\end{cases}
\]
\end{lem}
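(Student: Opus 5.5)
We would prove the formula through Lemma~\ref{lem_reg_admissible}, which gives
\[
\reg\bigl(J(C_n)^{(t)}\bigr)=2+\max\bigl\{\hdim_{\k}\Delta(J(H))+|\a^{<t}|\bigr\}
\]
over $t$-critical pairs $(H,\a)$. The lower bound comes from an explicit critical pair; the upper bound needs a joint estimate on $\hdim_{\k}\Delta(J(H))$ and $|\a^{<t}|$.

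\emph{Lower bound.} Let $W$ be a minimal cover of $C_n$ with $|W|=\tau_{\max}(C_n)=\lfloor 2n/3\rfloor$. Define $\a$ by $a_i=t-1$ for $i\in W$ and $a_i=0$ otherwise. Then $V(H)=V(C_n)$, and $|\a^{<t}|=(t-1)\tau_{\max}$. Since $t\ge 2$, an edge with both ends in $W$ has weight $2t-2\ge t$ and is deleted. An edge with exactly one end in $W$ has weight $t-1$ and is kept. By minimality of $W$, $H$ is a spanning subgraph of $C_n$ whose edges run between $W$ and its complement.

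For $n=4$, take $W=\{1,3\}$. Then $H=C_4$, and Lemma~\ref{lem_hdim_cycle} gives $\hdim_{\k}=1$. This yields $\reg\ge 2+1+2(t-1)=2t+1$.

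For $n\ne 4$, choose $W$ so that $H$ splits into components whose complexes are understood: single edges ($\hdim=0$) and short paths. Lemma~\ref{lem_disjoint} (each union adds $2$) together with Theorem~\ref{thm_reg_unicycli} for forests should give $\hdim_{\k}\Delta(J(H))\ge\tau_{\max}(C_n)-2$. Then $\reg\ge 2+(\tau_{\max}-2)+(t-1)\tau_{\max}=t\tau_{\max}$. A concrete pattern is $W=\{2,3,5,6,\dots\}$, which leaves $H$ as a union of paths of length $2$ centred at the vertices $\equiv 1 \pmod 3$. The residues of $n$ mod $3$ need small adjustments at the wrap-around.

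\emph{Upper bound.} First show that $\delta(C_n)=\lfloor 2n/3\rfloor$. Vertices of $\SP(C_n)$ are half-integral, and a fractional vertex on a cycle must be $(1/2,\dots,1/2)$ on an odd cycle, giving $n/2\le\lfloor 2n/3\rfloor$; integral vertices are minimal covers. Theorem~\ref{thm_degree_bound} then gives $|\a^{<t}|\le\tau_{\max}(t-1)$. Now take a critical pair $(H,\a)$ and split into two cases.

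If $H=C_n$, every edge has weight $<t$. Summing over edges gives $2|\a|\le n(t-1)$, that is, $|\a|\le n(t-1)/2$. Combined with $\hdim_{\k}=\lfloor(2n-4)/3\rfloor$ this yields the bound: for $n=4$ we get exactly $2t+1$, and for $n\ge 5$ the quantity $n(t-1)/2+\lfloor(2n-4)/3\rfloor+2$ is at most $\lfloor 2n/3\rfloor t$. The case $n=5$ should be checked by hand.

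If $H\subsetneq C_n$, then $H$ is a forest, and Theorem~\ref{thm_reg_unicycli} for forests gives $\hdim_{\k}\Delta(J(H))\le\tau_{\max}(H)-2$. It therefore suffices to prove
\[
|\a^{<t}|+\tau_{\max}(H)\le t\,\tau_{\max}(C_n).
\]

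\emph{Main obstacle.} This joint inequality is the hard step, because Theorem~\ref{thm_degree_bound} alone loses the term $\tau_{\max}(H)$. We would argue as in the proof of Theorem~\ref{thm_degree_bound}, looking for a point of $\SP(C_n)$ built from $\a$. Let $C$ be a maximum minimal cover of $H$ and consider $\b=\a^{<t}+\mathbf 1_C$. Every edge of $C_n$ not in $H$ has weight $\ge t$, and every edge of $H$ gains at least $1$ from $C$. So $\b/t$ should lie in $\SP(C_n)$ after adjusting on vertices with $a_i\ge t$, which are absent from $H$ and removed by truncation; we may need to add those vertices' neighbours to cover the edges at them.

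If this works, $|\b|\le t\,\delta(C_n)=t\tau_{\max}$, since $\delta$ maximises $|\cdot|$ over vertices, hence over the bounded region below. If the adjustment fails, the fallback is induction on the number of edges removed, using the reduction Lemmas~\ref{lem_reduction_1}--\ref{lem_reduction_6} on the path components of $H$ to trade $\hdim$ against $\tau_{\max}$ one step at a time.
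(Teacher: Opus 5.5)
Your treatment of the case $H=C_n$ is essentially the paper's. The case $H\subsetneq C_n$, which carries the upper bound, has a genuine gap.

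Your reduction to $|\a^{<t}|+\tau_{\max}(H)\le t\,\tau_{\max}(C_n)$ is legitimate. A critical $H\ne C_n$ is a forest, and $(H,\mathbf 0)$ is a $1$-critical pair of $H$, so $\hdim_{\k}\Delta(J(H))\le\reg J(H)-2=\tau_{\max}(H)-2$. But the argument you propose for that inequality cannot work: knowing $\b/t\in\SP(C_n)$ gives no upper bound on $|\b|$. The polyhedron $\SP(C_n)$ is closed upward (its recession cone is $\mathbb{R}^n_{\ge 0}$), so membership only bounds coordinates from below. For instance, $(1,\dots,1)\in\SP(C_n)$ has coordinate sum $n>\lfloor 2n/3\rfloor$. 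The estimate $|x|\le\delta(C_n)$ is available for points of the convex hull of the vertices (the minimal points of $\SP(C_n)$). Nothing in your construction puts $\b/t$ there: adding $\mathbf 1_C$ to $\a^{<t}$, and then raising coordinates at neighbours of removed vertices, only moves it upward. The fallback via the reduction lemmas is a sketch, not an argument.

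The missing idea, which is what the paper does, is to change the ambient graph rather than split the bound into two pieces. Since $H\ne C_n$, some edge, say $\{1,n\}$, is not in $E(H)$. Then $(H,\a)$ is also a $t$-critical pair of the path $P=C_n\setminus\{1,n\}$, because the admissible subgraph of $P$ for $\a$ is exactly $H$. Apply Lemma~\ref{lem_reg_admissible} to $P$, together with $\reg(J(P)^{(t)})=t\,\tau_{\max}(P)$ for paths and $\tau_{\max}(P)=\lfloor 2n/3\rfloor=\tau_{\max}(C_n)$. This gives $\hdim_{\k}\Delta(J(H))+|\a^{<t}|\le t\,\tau_{\max}(C_n)-2$ directly.

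Your inequality is in fact true, by a direct count. Each component of $H$ is a path $P_m$ with $m\ge 2$ on consecutive cycle vertices. Pairing consecutive vertices gives $\sum a_i\le\lceil m/2\rceil(t-1)\le\lfloor 2m/3\rfloor(t-1)$ on that component. Summing, $|\a^{<t}|\le(t-1)\tau_{\max}(H)$, and $\tau_{\max}(H)=\sum_s\lfloor 2m_s/3\rfloor\le\lfloor 2n/3\rfloor$.

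Two smaller points also need fixing.
\begin{enumerate}
\item In the case $H=C_n$ you need the integral bound $|\a|\le\lfloor n(t-1)/2\rfloor$. Unrounded, your inequality fails at $n=7$, $t=2$ (it reads $8.5>8$), not only at $n=5$.
\item Your lower-bound construction fails for $n=7$. The complement of any maximum minimal cover $W$ of $C_7$ has gaps $(1,1,2)$, so $H$ is $C_7$ minus one edge, i.e.\ $P_7$, whose complex is acyclic. No wrap-around adjustment helps. The paper instead gets the lower bound from degrees: $x_W^t$ is a minimal generator of $J(C_n)^{(t)}$, so $\reg(J(C_n)^{(t)})\ge t\,\tau_{\max}(C_n)$.
\end{enumerate}
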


\begin{proof}
Note that
\(\tau_{\max}(G) = \left\lfloor \frac{2n}{3} \right\rfloor.
\) If $n = 4$, then
\(J(G) = (x_1x_3, x_2x_4),
\) which is a complete intersection. Hence,
\(\reg\bigl(J(G)^{(t)}\bigr) = 2t + 1
\) for all $t \ge 1$.

Now assume that $n \neq 4$. By degree considerations and Lemma~\ref{lem_reg_admissible}, it suffices to show that if $(H,\a)$ is a critical pair for $G$, then
\[
\hdim_{\k}\bigl(\Delta(J(H))\bigr) + |\a^{<t}|
\le
t \left\lfloor \frac{2n}{3} \right\rfloor - 2.
\]

First, assume that $H$ is a proper subgraph of $C_n$. We may assume that $\{1,n\} \notin E(H)$. Let $G_1$ be the path obtained from $C_n$ by deleting the edge $\{1,n\}$. Then, by definition, $(H,\a)$ is a critical pair for $G_1$. By Lemma~\ref{lem_reg_admissible} and the case of paths (which follows from \cite{BKK}, or can be proved by induction as in the proof of Theorem~\ref{thm_reg_pow_uni}), we obtain
\[
|\a| + \hdim_{\k}\bigl(\Delta(J(H))\bigr)
\le
t \tau_{\max}(G_1) - 2
=
t \tau_{\max}(G) - 2.
\]

Now assume that $H = G$. In other words,
\[
a_i + a_{i+1} \le t - 1
\qquad
\text{for all } i = 1, \ldots, n.
\]
Hence,
\[
|\a|
\le
\left\lceil \frac{n}{2} \right\rceil (t - 1).
\]
Therefore,
\[
|\a| + \hdim_{\k}\bigl(\Delta(J(G))\bigr)
\le
\left\lceil \frac{n}{2} \right\rceil (t - 1)
+
\hdim_{\k}\bigl(\Delta(J(G))\bigr).
\]

When $n = 3$ or $5$, we have
\[
\left\lceil \frac{n}{2} \right\rceil = \tau_{\max}(G) \quad \text{and} \quad 
\hdim_{\k}\bigl(\Delta(J(G))\bigr) = \tau_{\max}(G) - 2,
\]
and therefore
\[
\left\lceil \frac{n}{2} \right\rceil (t-1)
+
\hdim_{\k}\bigl(\Delta(J(G))\bigr)
=
t\tau_{\max}(G) - 2.
\]

If $n \ge 6$, then
\[
\left\lceil \frac{n}{2} \right\rceil < \tau_{\max}(G) \quad \text{ and } \quad 
\hdim_{\k}\bigl(\Delta(J(G))\bigr)
\le
\tau_{\max}(G) - 1.
\]
Consequently,
\[
\left\lceil \frac{n}{2} \right\rceil (t-1)
+
\hdim_{\k}\bigl(\Delta(J(G))\bigr)
\le
t\tau_{\max}(G) - 2,
\]
which yields the desired inequality.
\end{proof}

\begin{thm}\label{thm_reg_pow_uni}
Let $G$ be a unicyclic graph whose unique cycle is $C_n$. Assume that $\delta(G) = \tau_{\max}(G)$. Then for all $t \ge 2$,
\[
\reg \bigl(J(G)^{(t)}\bigr)
=
\begin{cases}
t \tau_{\max}(G) + 1 & \text{if } n = 4 \text{ and } G \text{ is reducible to } C_4,\\
t \tau_{\max}(G) & \text{otherwise}.
\end{cases}
\]
\end{thm}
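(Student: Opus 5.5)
We prove the two inequalities separately, using Lemma~\ref{lem_reg_admissible} for the upper bound. For the lower bound, note that $J(G)^{(t)}$ has a minimal generator of degree $t\tau_{\max}(G)$: take $x_W^t$ with $W$ a minimal cover of maximal size. Hence $\reg(J(G)^{(t)}) \ge t\tau_{\max}(G)$ in all cases.

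In the exceptional case ($n=4$ and $G$ reducible to $C_4$) we need one more unit. Start from the $t$-critical pair of $C_4$ realizing $2t+1$ in Lemma~\ref{lem_pow_cycles}. Lift it back along the reduction moves, inverting Lemmas~\ref{lem_reduction_1}, \ref{lem_reduction_2} and \ref{lem_reduction_4}. Each move adds vertices $v$ on which we put $a_v=0$ (so the new edges stay in the subgraph) or $a_v=t-1$ on the appropriate vertices. The lifted subgraph has homological dimension larger by the number of vertices the move adds to $\tau_{\max}$. We also arrange that $|\a^{<t}|$ grows by $(t-1)$ times that same number, so the total grows by $t(\tau_{\max}(G)-\tau_{\max}(G_1))$. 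Concretely, we take $H$ to be the whole reduced tree part with $\a$ supported on the $C_4$ part, and check that the equalities in those lemmas hold. Lemma~\ref{lem_reg_admissible} then gives $\reg \ge t\tau_{\max}(G)+1$.

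For the upper bound we must show that every $t$-critical pair $(H,\a)$ of $G$ satisfies
\[
\hdim_{\k}\bigl(\Delta(J(H))\bigr)+|\a^{<t}| \le t\tau_{\max}(G)-2,
\]
with $-1$ in place of $-2$ in the exceptional case. We argue by induction on $|V(G)|$, with Lemma~\ref{lem_pow_cycles} as the base case $G=C_n$ and Lemma~\ref{lem_disjoint} reducing to connected $G$. The key split is as follows.

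\emph{Case A: $H$ misses an edge of the cycle, or $H$ is a proper subgraph of $G$.} Then $(H,\a)$ is a critical pair of a smaller graph $G_1$: a forest, or $G$ with some vertex or edge removed. We have $\tau_{\max}(G_1)\le\tau_{\max}(G)$ and, since $G_1$ is a subgraph, $\delta(G_1)\le\delta(G)$. Induction (or the forest case) applies, provided $G_1$ is not in the exceptional class when $G$ is not. If $G_1$ is exceptional but $G$ is not, we need $\tau_{\max}(G_1)<\tau_{\max}(G)$, which we obtain from an argument like Lemma~\ref{lem_type_1_2}.

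\emph{Case B: $H=G$.} Then $\Delta(J(G))$ is not acyclic, so by Lemma~\ref{lem_reduction_3} no vertex carries both a leaf and a pure $2$-branch. If some reduction move applies, with $G_1$ as in Lemmas~\ref{lem_reduction_1}, \ref{lem_reduction_2}, \ref{lem_reduction_4}, \ref{lem_reduction_5} or \ref{lem_reduction_6}, the homological dimension drops by $k$ while $\tau_{\max}$ drops by at least $k$. It remains to bound $|\a^{<t}|$. Here we use Theorem~\ref{thm_degree_bound}: $|\a^{<t}|\le\delta(G)(t-1)=\tau_{\max}(G)(t-1)$, which is exactly where the hypothesis $\delta(G)=\tau_{\max}(G)$ enters. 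Combining this with $\hdim_{\k}(\Delta(J(G)))\le\tau_{\max}(G)-2$ from Theorem~\ref{thm_reg_unicycli} gives the bound $t\tau_{\max}(G)-2$. In the exceptional case $\hdim$ can equal $\tau_{\max}(G)-1$, which gives $-1$ and is still fine.

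So Case B is essentially immediate from Theorem~\ref{thm_reg_unicycli} and Theorem~\ref{thm_degree_bound}. The main obstacle is Case A: verifying that the exceptional class behaves monotonically under passing to subgraphs. That is, if $G$ is not reducible to $C_4$ but a subgraph $G_1$ arising from a critical pair is, then $\tau_{\max}$ strictly drops. I would handle this by examining the first obstruction to reduction in $G$: a configuration as in Lemma~\ref{lem_reduction_3}, or a cycle of length other than $4$. Deleting vertices to remove that obstruction costs a vertex of some maximal minimal cover, using the cover-modification arguments of Lemmas~\ref{lem_reduction_1} and~\ref{lem_type_1_2}.
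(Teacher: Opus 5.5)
Your proposal has genuine gaps in both halves of the upper bound. Case B is not ``essentially immediate.'' Combining Theorem~\ref{thm_degree_bound} with Theorem~\ref{thm_reg_unicycli} only works when $\hdim_{\k}(\Delta(J(G)))\le\tau_{\max}(G)-2$. Theorem~\ref{thm_reg_unicycli} allows $\hdim_{\k}(\Delta(J(G)))=\tau_{\max}(G)-1$ whenever $G$ is reducible to $C_n$ with $n\equiv 1\pmod 3$, so also for $n=7,10,\dots$, not only in the exceptional case $n=4$ of the statement you are proving. Concretely, let $G$ be $C_{10}$ with a path $c,p_1,p_2,p_3$ attached at a cycle vertex $c$. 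It is bipartite, so $\delta(G)=\tau_{\max}(G)$; moreover $\tau_{\max}(G)=8$, and $\hdim_{\k}(\Delta(J(G)))=5+2=7$ by Lemmas~\ref{lem_hdim_cycle} and~\ref{lem_reduction_2}. Your estimate for the pair $(G,\a)$ then gives only $8(t-1)+7=8t-1$, i.e.\ $\reg(J(G)^{(t)})\le 8t+1$. What is missing is a bound on $|\a|$ for $H=G$ strictly below $\tau_{\max}(G)(t-1)$. Here, covering $V(G)$ by six disjoint edges and the vertex $p_3$ gives $|\a|\le 7(t-1)$, and then $7t\le 8t-2$. The paper gets this sharper input from the cycle itself ($|\a|\le\lceil n/2\rceil(t-1)$ in Lemma~\ref{lem_pow_cycles}). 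It propagates the bound by peeling off leaves and pure branches one move at a time and applying induction to the smaller graph. Each unit of homological dimension removed comes with at most $t-1$ of exponent on the deleted vertices, and this cost of at most $t$ is paid for by a unit drop of $\tau_{\max}$. A global bound in terms of $\delta(G)$ cannot see this.

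Case A has a more basic problem than the one you single out. The inequalities $\tau_{\max}(G_1)\le\tau_{\max}(G)$ and $\delta(G_1)\le\delta(G)$ are fine for vertex deletions but fail for edge deletions; the paper's final remark warns about exactly this. Your scheme cannot avoid edge deletions, because $H$ may omit an edge $xy$ while keeping both endpoints (only $a_x+a_y\ge t$ is forced). Take the $4$-cycle $x,y,z,w$ with leaves $x_1,x_2$ at $x$ and $y_1,y_2$ at $y$. It is bipartite with $\tau_{\max}(G)=\delta(G)=4$. However, $\{x_1,x_2,w,z,y_1,y_2\}$ is a minimal cover of the tree $G-xy$, so $\tau_{\max}(G-xy)=\delta(G-xy)=6$. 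For $t=2$, set $a_x=a_y=1$ and $a_v=0$ otherwise. Then $(G-xy,\a)$ is a $2$-critical pair of $G$ (its complex has homological dimension $4$), and induction on $G-xy$ bounds it only by $6t-2$ instead of $4t-2$. So you must exploit $a_x+a_y\ge t$ on the missing edge, or, as the paper does, delete only vertices and reserve edge deletion for the bare cycle, where $\tau_{\max}(P_n)=\tau_{\max}(C_n)$.

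Three further points. First, the induction hypothesis on $G_1$ needs $\delta(G_1)=\tau_{\max}(G_1)$, which an inequality $\delta(G_1)\le\delta(G)$ does not supply. Second, the drop ``$\tau_{\max}$ decreases by at least $k$'' that you quote must be checked case by case. In the same graph, $\{x,z,y_1,y_2\}$ is still a minimal cover of $G\setminus x_1$, so Lemma~\ref{lem_reduction_1}(1) fails there. Third, your lower-bound recipe for $n=4$ cannot work as written. With $\a$ supported on the $C_4$ you get $|\a^{<t}|\le 2(t-1)$. By contrast, for $C_4$ with a path $1,p_1,p_2,p_3$ attached at vertex $1$ one needs $|\a|=4(t-1)$. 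This is obtained by putting $t-1$ on $2,4,p_1,p_3$, so the choice on the cycle must depend on where the tree is attached.
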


\begin{proof}
We prove the case $n \neq 4$, and leave the case $n = 4$ to the interested reader. The proof in this case follows a similar line of argument, with additional attention to the homological dimension.

By degree reasons, it suffices to show that
\[
\reg\bigl(J(G)^{(t)}\bigr) \le t \tau_{\max}(G).
\]
We proceed by induction on the number of vertices of $G$. The case $n \le 2$ is clear.

By Lemma~\ref{lem_reg_admissible}, it suffices to show that if $(H,\a)$ is a $t$-critical pair of $G$, then
\[
\hdim_{\k}\bigl(\Delta(J(H))\bigr) + |\a^{<t}| \le t \tau_{\max}(G) - 2.
\]

By Lemma~\ref{lem_pow_cycles}, we now assume that $G$ has a leaf. We perform a sequence of reduction moves. We present one case in detail; the remaining cases are handled similarly.

Suppose that $G$ has a vertex $u$ with at least two leaves, say $v_1$ and $v_2$. First, assume that one of these leaves does not belong to $H$, say $v_1 \notin V(H)$. Let $G_1 = G \setminus v_1$. Then $(H,\a')$, where $\a'$ is the restriction of $\a$ to $V(G_1)$, is a $t$-critical pair of $G_1$. Hence, by the induction hypothesis,
\[
|\a'^{<t}| + \hdim_\k\bigl(\Delta(J(H))\bigr)
\le
t \tau_{\max}(G_1) - 2.
\]
By Lemma~\ref{lem_reduction_1}, we obtain
\[
\begin{aligned}
|\a^{<t}| + \hdim_\k\bigl(\Delta(J(H))\bigr)
&\le
|\a'^{<t}| + (t-1) + \hdim_\k\bigl(\Delta(J(H))\bigr) \\
&\le
t \tau_{\max}(G_1) - 2 + (t-1) \\
&\le
t \tau_{\max}(G) - 2.
\end{aligned}
\]
Now assume that all leaves attached to $u$ remain in $H$. Let $G_1 = G \setminus v_1$ as above, and set $H_1 = H \setminus v_1$. Then $(H_1,\a')$ is a $t$-critical pair of $G_1$. By the induction hypothesis and Lemma~\ref{lem_reduction_1}, we deduce that
\[
\begin{aligned}
|\a^{<t}| + \hdim_\k\bigl(\Delta(J(H))\bigr)
&\le
|\a'^{<t}| + (t-1)
+ \hdim_\k\bigl(\Delta(J(H_1))\bigr) + 1 \\
&\le
t \tau_{\max}(G_1) - 2 + (t-1) + 1 \\
&\le
t \tau_{\max}(G) - 2.
\end{aligned}
\]
Thus, we may assume that every vertex of $G$ has at most one leaf attached to it. 

We then apply the reduction steps described in Lemmas~\ref{lem_reduction_2}, \ref{lem_reduction_3}, and \ref{lem_reduction_4} to reduce to the case where $G$ is either a path of length at most $2$ or a unicyclic graph in which each vertex on the cycle has at most one pure branch of length $1$ or $2$ attached to it. In the latter case, we apply Lemmas~\ref{lem_reduction_5} and \ref{lem_reduction_6} to reduce to the case of a tree, and hence to a path of length at most $2$, for which the result is clear.

We note that a given reduction step may be applied more than once. For example, after shortening a branch, several branches of the same length may arise; after removing these branches, the branching vertex itself may become part of a pure branch. However, at each step, the number of vertices decreases by at least one.
\end{proof}

\begin{proof}[Proof of Theorem~\ref{main}] When $G$ is a bipartite graph, it follows from a result of Herzog, Hibi, and Trung \cite{HHT} that $J(G)^t = J(G)^{(t)}$ for all $t$. Hence, we have $\tau_{\max}(G) = \delta(G)$; see also \cite[Theorem~4.9]{DHNT}. The conclusion then follows from Theorem~\ref{thm_reg_pow_uni}.
\end{proof}

\begin{rem}
\begin{enumerate}
    \item In general, if $H$ is a subgraph of $G$, one does not necessarily have
    \(     \tau_{\max}(H) \le \tau_{\max}(G);    \) see \cite{F3}.
    \item By \cite[Theorem 2.4]{LW},     \(    \reg\bigl(J(C_n)^t\bigr)
    =
    \reg\bigl(J(C_n)^{(t)}\bigr)
    \) where $C_n$ is an odd cycle. Furthermore,
    \(    \delta(C_n) = \tau_{\max}(C_n).    \) Hence, Theorem~\ref{thm_reg_pow_uni} yields a formula for the regularity of powers of cover ideals of cycles.
    \item Bijender, Kumar, and Kumar \cite{BKK} proved that all powers of the cover ideal of a simplicial forest are componentwise linear. In particular,
    \[
    \reg\bigl(J(G)^t\bigr) = t \tau_{\max}(G)
    \]
    for all $t \ge 1$. Consequently, the symbolic powers of cover ideals of path ideals of paths are also componentwise linear. It would be interesting to determine the regularity of symbolic powers of cover ideals of path ideals of cycles, as the corresponding problem for path ideals of cycles is itself subtle; see \cite{BCV}.
\end{enumerate}
\end{rem}

\vspace{1cm}
\noindent {\bf Data Availability} Data sharing is not applicable to this article as no datasets were generated or analyzed during the current study.

\noindent {\bf Conflict of interest} There are no competing interests of either financial or personal nature.

\end{document}